\documentclass[12pt]{amsart}
%%%%%%%%%%%%%%%%%%%%%%%%%%%%%%%%%%%%%%%%%%%%%%%%%%%%%%%%%%%%%%%%%%%%%%%%%%%%%%%%%%%%%%%%%%%%%%%%%%%%%%%%%%%%%%%%%%%%%%%%%%%%%%%%%%%%%%%%%%%%%%%%%%%%%%%%%%%%%%%%%%%%%%%%%%%%%%%%%%%%%%%%%%%%%%%%%%%%%%%%%%%%%%%%%%%%%%%%%%%%%%%%%%%%%%%%%%%%%%%%%%%%%%%%%%%%
\usepackage{amsmath,amssymb,amsfonts,amsthm,amsopn}

\usepackage{graphicx,tikz}
\usepackage[all]{xy}
\usepackage{amsmath}
\usepackage{multirow, longtable, makecell, caption, array}
\usepackage{amssymb}
\usepackage{mathtools}
\usepackage{pb-diagram}
\usepackage{cellspace} %
\setlength\cellspacetoplimit{5pt}
\setlength\cellspacebottomlimit{5pt}

\setcounter{MaxMatrixCols}{10}
%TCIDATA{OutputFilter=Latex.dll}
%TCIDATA{Version=5.50.0.2953}
%TCIDATA{<META NAME="SaveForMode" CONTENT="1">}
%TCIDATA{BibliographyScheme=Manual}
%TCIDATA{LastRevised=Saturday, January 02, 2016 16:36:03}
%TCIDATA{<META NAME="GraphicsSave" CONTENT="32">}
%TCIDATA{Language=American English}

%\setlength{\textwidth}{13,5cm} 
\setlength{\textheight}{20cm}
\setlength{\oddsidemargin}{0pt}
\setlength{\evensidemargin}{0pt}
\setlength{\textwidth}{150 mm}   

\newcommand{\modsp}{modulation space}
\newcommand{\tf}{time-frequency}

\newcommand{\aw}{Anti-Wick}
\newcommand{\aaf}{A_a^{\varphi_1,\varphi_2}}

\newtheorem{tm}{Theorem}[section]
\newtheorem{lemma}[tm]{Lemma}
\newtheorem{prop}[tm]{Proposition}

\newtheorem{theorem}{Theorem}[section]
\newtheorem{corollary}[theorem]{Corollary}
\newtheorem{definition}[theorem]{Definition}

\newtheorem{proposition}[theorem]{Proposition}
\newtheorem{remark}[theorem]{Remark}
\newtheorem*{thm*}{Theorem}
\newtheorem*{cor*}{Corollary}
\newcommand{\beqa}{\begin{eqnarray*}}
	\newcommand{\eeqa}{\end{eqnarray*}}

\newcommand{\field}[1]{\mathbb{#1}}
\newcommand{\bR}{\field{R}}
\newcommand{\bN}{\field{N}}
\newcommand{\bZ}{\field{Z}}
\newcommand{\bC}{\field{C}}

\def\G{\mathcal{G}}

\def\la{\lambda}

\def\cF{\mathcal{F}}
\def\cS{\mathcal{S}}

\def\a{\aleph}

\def\rd{\bR^d}

\def\rdd{{\bR^{2d}}}
\def\zdd{{\bZ^{2d}}}

\def\lrd{L^2(\rd)}

\def\zd{\bZ^d}

\def\intrd{\int_{\rd}}
\def\intrdd{\int_{\rdd}}

\def\R{\right)}

\def\<{\left<}
\def\>{\right>}

\def\inv{^{-1}}

\def\mv1{M_v^1}

\def\Mmpq{M_m^{p,q}}
\def\phas{(x,\omega )}

\hyphenation{Cara-theo-do-ry}
\hyphenation{Dau-be-chies}
\hyphenation{Barg-mann}
\hyphenation{dis-tri-bu-ti-ons}
\hyphenation{pseu-do-dif-fe-ren-tial}
\hyphenation{ortho-normal}

\def\o{\omega}
\def\a{\alpha}
\def\b{\beta}

\def\N{\mathbb{N}}
\def\R{\mathbb{R}}
\def\Ren{\mathbb{R}^d}
\def\Renn{\mathbb{R}^{2d}}

\def\Qs{{Q_s}}

\def\sch{\mathcal{S}}

\def\Fur{\mathcal{F}}

\def\f{\varphi}

\def\gaw{A_a^{\f_1,\f_2}}

\def\Sn2{S_{2}(L^{2}(\Ren))}
\def\S1{S_{1}(L^{2}(\Ren))}
\def\sig00{\sigma_{0,0}}

\def\la{\langle}
\def\ra{\rangle}

\def\vs{v_s}

\begin{document}
	\begin{abstract} We study decay and smoothness properties for eigenfunctions of compact localization operators $\gaw$. Operators $\gaw$ with symbols $a$ in the wide modulation space  $M^{p,\infty}$ (containing the Lebesgue space $L^p$), $p<\infty$, and  windows $\f_1,\f_2$ in the Schwartz class $\cS$ are known to be compact. We show that their $L^2$-eigenfuctions with non-zero eigenvalues are indeed highly compressed onto a few Gabor atoms. Similarly, for symbols $a$ in the weighted modulation spaces $M^{\infty}_{v_s\otimes 1} (\rdd)$, $s>0$ (subspaces of $M^{p,\infty}(\rdd)$, $p>2d/s$) the  $L^2$-eigenfunctions of $\gaw$ are actually Schwartz functions.\par
	An important role is played by quasi-Banach Wiener amalgam and modulation spaces.
	As a tool, new convolution relations for modulation spaces and multiplication relations for Wiener amalgam spaces in the quasi-Banach setting are exhibited.
	\end{abstract}
	
	\title{Decay and Smoothness for Eigenfunctions of Localization Operators}
	\author{Federico Bastianoni}
	\address{Dipartimento di Matematica, Universit\`a di Torino,  via Carlo Alberto 10, 10123 Torino, Italy}
	%\curraddr{\"{y} }
	\email{federico.bastianoni@unito.it}
	\thanks{}
	\author{Elena Cordero}
	\address{Dipartimento di Matematica, Universit\`a di Torino, via Carlo Alberto 10, 10123 Torino, Italy}
	%\curraddr{\"{y} }
	\email{elena.cordero@unito.it}
	\thanks{}
	%\author{Lorenza D'Elia}
	%\address{Dipartimento di Matematica, Universit\`a di Torino, Dipartimento di Matematica, via Carlo Alberto 10, 10123 Torino, Italy}
	%\curraddr{\"{y} }
%	\email{lorenza.delia@edu.unito.it}
	%\thanks{}
	\author{Fabio Nicola}
	\address{Dipartimento di Scienze Matematiche, Politecnico di Torino, corso
		Duca degli Abruzzi 24, 10129 Torino, Italy}
	%\curraddr{\"{y} }
	\email{fabio.nicola@polito.it}
	\thanks{}
	
	\subjclass[2010]{47G30; 35S05; 46E35; 47B10}
	\keywords{Time-frequency analysis, localization operators, short-time Fourier transform, quasi-Banach spaces, modulation spaces, Wiener amalgam spaces}
	\date{}
\maketitle

\section{Introduction }

The study of localization operators has a long-standing tradition. They have become popular with the papers by I. Daubechies \cite{DB1,DB2} and from then widely investigated by several  authors in different fields of mathematics: from signal analysis to pseudodifferential calculus, see, for instance \cite{Abreu2012,Abreu2016,BG2015,EleCharly2003,ECSchattenloc2005,medit,CharlyToft2011,CharlyToft2013,Nenad2015,Nenad2016,Nenad2018,WongLocalization}. In quantum mechanics they were already known as  Anty-Wick operators, cf. \cite{Shubin91} and the references therein. 

Localization operators can be introduced via the time-frequency representation known as  short-time Fourier transform (STFT). Let us though introduce the STFT. Recall  first  the  modulation $M_{\omega}$ and translation $T_{x}$ operators of a function $f$ on $\rd$:
 \[
 M_{\omega}f\left(t\right)= e^{2\pi it \omega}f\left(t\right),\qquad T_{x}f\left(t\right)= f\left(t-x\right),\quad \omega,x\in\rd.
 \]
 For $z=(z_1,z_2)\in\rdd$, we define the \tf \, shift $\pi(z)=M_{z_2} T_{z_1}$. 
Fix $\f\in\cS(\rd)\setminus\{0\}$.  We define the short-time Fourier transform of a tempered distribution $f\in\cS'(\rd)$ as
 \begin{equation}\label{STFTdef}
 V_{\f}f\phas=\langle f,\pi\phas \f\rangle=\Fur (f\overline{T_x \f})(\omega)=\int_{\Ren}
 f(y)\, {\overline {\f(y-x)}} \, e^{-2\pi iy \o }\,dy.
 \end{equation}
 
The localization operator $\aaf $ with symbol $a$ and
windows $\f _1, \f _2$ is formally defined   to be
\begin{equation}
\label{eqi4}
\aaf f(t)=\int_{\Renn}a \phas V_{\f _1}f \phas M_\omega T_x \f _2 (t)
\,
dx d\omega .
\end{equation}
If $\f _1(t)  = \f _2 (t) = e^{-\pi t^2}$, then $A_a = \aaf $ is the
classical \aw\ operator and the mapping $a \mapsto \aaf  $ is  a quantization rule in quantum mechanics \cite{Berezin71,deGossonsymplectic2011,Shubin91,WongLocalization}.
%Often it is more convenient to interpret the definition of $\aaf $
%in a weak  sense, then \eqref{eqi4} can be recast as
%\begin{equation}\label{anti-Wickg}
%\la \gaw f,g\ra=\la aV_{\f_1}f, V_{\f_2}g\ra=\la
%a,\overline{V_{\f_1}f}\,  V_{\f_2}g\ra,\quad f,g\in\sch(\Ren)\, .
%\end{equation}

If one considers a symbol $a$ in the Lebesgue space $L^q(\rdd)$ ($1\leq q<\infty$) and window functions $\f_1,\f_2$ in the Feichtinger's algebra $M^1(\rd)$ (see below for its definition) then the localization operator $\aaf $ is  in the Schatten class $S_q$ (see \cite{EleCharly2003}). This implies, in particular, that $\aaf $ is a bounded and compact operator on $L^2(\rd)$.

Sharp results for localization operators on modulation spaces  (Banach case)  were obtained in \cite{Wignersharp2018}, thus concluding the open issues related to this problem.

The focus of this paper is the properties of \emph{eigenfunctions}  of \emph{compact} localization operators.

The study of eigenvalues and eigenfunctions of a restrict class of compact and self-adjoint localization operators, namely of the type $A_{\chi_\Omega}^{\f,\f}$, where $\Omega$ is a compact domain of the \tf\, plane and the window $\f$ is in $L^2(\rd)$,  was pursued in \cite{Abreu2012,Abreu2016,Abreu2017}. 
The focus of the previous papers, as well as new recent contributions \cite{Luef1,Luef2}, extending the previous results,  is the asymptotic behavior of the eigenvalues, depending on the domain $\Omega$.

 Our perspective is different: properties of eigenfunctions of a compact localization operator having a general symbol $a$, without any requirement on the geometry of the function $a$. In particular, the symbol $a$  does not need to have a compact support. Besides, the related localization operator $\aaf$  is not necessarily  a self-adjoint operator (but $\aaf$ is compact). It is easy to check that  the adjoint of a localization operator is given by $$(\aaf)^*=A_{\bar a}^{\f_2,\f_1};$$
hence the self-adjointness property forces the choice $\f_1=\f_2$ and the symbol $a$ real valued, as for the case $A_{\chi_\Omega}^{\f,\f}$ mentioned above. 
Our framework can allow the use of two different windows  $\f_1$ and $\f_2$ to analyze and synthesize the signal $f$, respectively. Moreover, the symbol $a$ can be a complex-valued function. 

For a linear bounded operator $T$ on $\lrd$ we denote by $\sigma(T)$ the \emph{spectrum} of $T$, that is the set $\{\lambda\in\bC: \,T-\lambda I\, \mbox{is\,not\,invertible}\}$; in particular, the set $\sigma_P(T)$ denotes the \emph{point spectrum} of $T$, that is 
 $$\sigma_P(T)=\{\lambda\in\bC\,: \exists\,\mbox{a\,non-zero}\,f\,\in\lrd \,\mbox{such\,that} \,Tf=\lambda f\}.$$

 Under our assumptions, the operator $\aaf$ is a compact mapping on $\lrd$. Hence, the spectral theory for compact operators yields $$\sigma(\aaf)\setminus\{0\}=\sigma_P(\aaf)\setminus\{0\}.$$For sake of completeness, we recall that for compact operators on $\lrd$ the number zero is always in the spectrum $\sigma(\aaf)$  and that the point spectrum $\sigma_P(\aaf)\setminus\{0\}$ (possibly empty) is at most countably infinite.

A function $f\in\lrd$ is called an \emph{eigenfunction} of the operator $\aaf$ if there exists $\lambda\in \bC$ such that
$$ \aaf f=\lambda f.$$
 We are interested in the properties of eigenfuctions  of $\aaf$ related to eigenvalues $\lambda\in \sigma_P(\aaf)\setminus\{0\}$, whenever $ \sigma_P(\aaf)\setminus\{0\}\not=\emptyset$.  

To chase our goal, the new idea is to use properties of quasi-Banach modulation and Wiener amalgam spaces. Some of these issues are investigated for the first time  in this paper.

To give a flavour of these results, we recall the definition of modulation spaces (un-weighted case: for the more general  cases see below). 

Fix a non-zero window function $g\in\cS(\rd)$ and  $p,q\in (0,\infty)$. The modulation space $M^{p,q}(\rd)$ consists of all tempered distributions $f\in\cS'(\rd)$ such that 

$$\| f\|_{M^{p,q}}:=\|V_g f\|_{L^{p,q}}=\left(\intrd\left(\intrd  |V_g f\phas |^p dx\right)^{\frac qp} d\o \right)^{\frac1q}<\infty
$$
(with natural modifications when $p=\infty$ or $q=\infty$).
For $p,q\geq 1$ the function $\|\cdot\|_{M^{p,q}}$ is a norm, and different window functions $g\in\cS(\rd)$ yield equivalent norms, thus the same space. Modulation spaces  $M^{p,q}(\rd)$, $p,q\geq 1$, are Banach spaces,  invented by H. Feichtinger in  \cite{feichtinger-modulation}, where many of their properties were already investigated.   

We remark that a localization operator $\gaw$ with   symbol $a\in M^{p,\infty}$, $1\leq p<\infty$,  and windows $\f_1,\f_2\in M^1$,    belongs to the Schatten class $S_p$ and, in particular, it is a \emph{compact} operator on $\lrd$ (cf. \cite[Theorem 1]{ECSchattenloc2005}).

 The (quasi-)Banach space $M^{p,q}$, $p,q>0$, were first introduced and studied by Y.V. Galperin and S. Samarah in \cite{Galperin2004}, see the next section for more details. Roughly speaking, the mapping $f$ is in $M^{p,q}(\rd)$ if it locally behaves like a function in $\cF L^q(\rd)$ and  ``decays'' as a function in $L^p(\rd)$ at infinity.

In this paper we extend convolution relations for Banach modulation spaces exhibited in \cite{EleCharly2003, Toftweight2004} to the quasi-Banach ones, see Proposition \ref{mconvmp} below. This result seems remarkable by itself.

Such convolution relations will be crucial for our main result, that can be simplified as follows (cf. Theorem \ref{main}).

\begin{theorem}
	Consider a symbol $a\in M^{p,\infty}(\rdd)$,  $0<p< \infty$,  non-zero windows $\f_1,\f_2\in\cS(\rd)$ and assume that $\sigma_P(\aaf)\setminus\{0\}\not=\emptyset$. If $\lambda\in \sigma_P(\aaf)\setminus\{0\}$,   any eigenfunction $f\in\lrd$ with eigenvalue  $\lambda$  satisfies
	$f\in \bigcap_{\gamma>0} M^\gamma(\rd)$.
\end{theorem}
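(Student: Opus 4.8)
The plan is to exploit the eigenfunction equation $\aaf f=\lambda f$ with $\lambda\neq 0$ to write $f=\lambda^{-1}\aaf f$, thereby transferring regularity from the operator to its eigenfunction via a bootstrap argument. The central observation is that the localization operator $\aaf$ admits an integral representation in terms of the STFT, namely $\aaf f$ is a superposition of time-frequency shifts of $\f_2$ weighted by $a\cdot V_{\f_1}f$. The key tool will be the mapping properties of $\aaf$ between modulation spaces: roughly, if $f\in M^{r}(\rd)$ for some $r$, then $V_{\f_1}f$ has good decay/integrability, and multiplying by the symbol $a\in M^{p,\infty}$ (using the multiplication and convolution relations for quasi-Banach modulation and Wiener amalgam spaces cited as Proposition \ref{mconvmp}) produces an output $\aaf f$ lying in a \emph{better} modulation space $M^{r'}(\rd)$ with $r'<r$. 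Since $f=\lambda^{-1}\aaf f$, the eigenfunction inherits this improved membership.

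First I would record the a priori regularity: an $L^2$-eigenfunction lies in $M^2(\rd)=L^2(\rd)$, giving the base case of the induction. Next I would establish the gain step: assuming $f\in M^{\gamma}(\rd)$, I would use the convolution relations for quasi-Banach modulation spaces together with $a\in M^{p,\infty}$ and $\f_1,\f_2\in\cS(\rd)$ to show $\aaf f\in M^{\gamma'}(\rd)$ for some strictly smaller exponent $\gamma'$. The mechanism is that the Schwartz windows contribute arbitrarily fast decay in the STFT domain, the symbol in $M^{p,\infty}$ contributes a fixed amount of decay, and each application of the operator sharpens the integrability exponent. Iterating the relation $f=\lambda^{-1}\aaf f$ then drives $\gamma$ down through a decreasing sequence $\gamma_n\to 0$, so that $f\in\bigcap_{\gamma>0}M^{\gamma}(\rd)$.

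The main obstacle, I expect, is making the gain step quantitative and self-consistent in the quasi-Banach regime, where $p,q<1$ and the triangle inequality fails, so that the usual H\"older- and Young-type estimates must be replaced by their quasi-Banach analogues. Specifically, one must verify that the convolution and multiplication relations of Proposition \ref{mconvmp} yield a genuine strict improvement $\gamma'<\gamma$ at each stage, and that the implied constants (which may grow with $n$ along the iteration) do not blow up so fast as to obstruct membership in the intersection; since membership in each $M^{\gamma}$ is a qualitative statement at the end, the growth of constants is harmless, but the bookkeeping of exponents requires care. A secondary technical point is ensuring that the STFT $V_{\f_1}f$, composed with the symbol, remains in the correct Wiener amalgam space so that the synthesis against $\f_2$ lands in the target modulation space; this is exactly where the new quasi-Banach Wiener amalgam multiplication relations are needed.
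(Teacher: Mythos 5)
Your overall skeleton---rewrite the eigenfunction equation as $f=\lambda^{-1}\aaf f$ and bootstrap through a strictly decreasing sequence of exponents $\gamma_n\to 0$, noting that the growth of constants is irrelevant because each membership $f\in M^{\gamma_n}$ is qualitative---is exactly the paper's strategy, and your exponent arithmetic ($1/\gamma_{n+1}=1/p+1/\gamma_n$, forced down by $p<\infty$) is the right one. The difference, and the problem, lies in the mechanism you propose for the gain step. You want to form the pointwise product $a\cdot V_{\f_1}f$, place it in a Wiener amalgam space via the H\"older-type relations of Proposition \ref{MR}, and then synthesize against $\f_2$ using the amalgam convolution relations. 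That route is precisely what the paper uses in Section 4 for symbols $a\in L^q_m(\rdd)$ (Theorem \ref{mainLp}), where $a$ is an honest function and $L^q_m=W(L^q,L^q_m)$. But it does not transfer to $a\in M^{p,\infty}(\rdd)$: this class contains genuine tempered distributions that are not locally $L^q$ functions for any $q>0$ (e.g.\ a Dirac mass $\delta$ satisfies $|V_g\delta(z_1,z_2)|=|g(-z_1)|$, hence $\delta\in M^{p,\infty}$ for every $p$). For such symbols the product $a\cdot V_{\f_1}f$ is at best a distribution, it lies in no space $W(L^{q},L^{p_1}_m)$ of locally $p$-integrable functions, and the amalgam H\"older and convolution inequalities you invoke are simply not available. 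Your citation of Proposition \ref{mconvmp} does not repair this, since that proposition is a \emph{convolution} relation for modulation spaces, not a multiplication relation, and cannot be used to estimate a pointwise product of a distribution with the STFT.

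The paper circumvents this by never multiplying by $a$ at all. It passes to the Weyl form $\aaf=L_\sigma$ with $\sigma=a\ast W(\f_2,\f_1)$; since $\f_1,\f_2\in\cS(\rd)$ gives $W(\f_2,\f_1)\in\cS(\rdd)\subset M^{q,\gamma}$ for all $q,\gamma$, the convolution relation of Proposition \ref{mconvmp} (applied with $q=1$, $r=p$ when $p\ge 1$, and with $p=q=r$ when $p<1$) places $\sigma\in M^{p,\gamma}(\rdd)$ for every $\gamma>0$. Convolution of a distribution in $M^{p,\infty}$ with a Schwartz function is unproblematic, which is exactly why this step succeeds where pointwise multiplication fails. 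The bootstrap is then carried out at the level of the Weyl operator via Theorem \ref{Charpseudo}(i) ($L_\sigma: M^q\to M^\gamma$ for $1/p+1/q=1/\gamma$), which is Proposition \ref{eigenfunctWeyl}. To repair your argument you should either restrict your multiplication-based gain step to function symbols (recovering Theorem \ref{mainLp}) or replace it with the convolution step on the Weyl symbol as above.
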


Roughly speaking, this means that  $L^2$ eigenfunctions of such compact localization operators reveal to be extremely well-localized. To make this statement more precise, we  use Gabor frames.  Let $\Lambda=\alpha\zd\times \beta\zd$  be a lattice of the time-frequency plane.
The set  of time-frequency shifts $\G(g,\Lambda)=\{\pi(\lambda)g:\
\lambda\in\Lambda\}$, for a  non-zero $g\in L^2(\rd)$, is called a
Gabor system. The set $\G(g,\Lambda)$   is
a Gabor frame if there exist constants $A,B>0$ such that
\begin{equation}\label{gaborframe}
A\|f\|_2^2\leq\sum_{\lambda\in\Lambda}|\langle f,\pi(\lambda)g\rangle|^2\leq B\|f\|^2_2,\qquad \forall f\in L^2(\rd).
\end{equation}
If $A=B=1$ we call $\G(g,\Lambda)$ a \emph{Parseval} Gabor frame. In this case \eqref{gaborframe} reduces to \begin{equation}\label{Parseval}
\|f\|_2^2=\sum_{\lambda\in\Lambda}|\langle f,\pi(\lambda)g\rangle|^2,\quad \forall f\in L^2(\rd).
\end{equation}

From now on we restrict to Parseval Gabor frames. If the eigenfunction $f$ above  satisfies  $f\in \bigcap_{\gamma>0} M^\gamma(\rd)$, then  $f$ is highly compressed onto a few Gabor atoms $\pi(\lambda)g$. Indeed,  
for $N\in \bN_+$, its $N$-term approximation error $\sigma_N(f)$ presents super-polynomial decay. We define
\begin{equation}\label{Sigma}
\Sigma_N=\{ p=\sum_{k,n\in F} c_{k,n} \pi(\a k,\beta n)g\,:\,\, c_{k,n}\in\bC,\,\,F\subset \zd\times \zd, \,\,\mbox{card}\,F\leq N \}
\end{equation}
 (the set of all linear combinations of Gabor atoms consisting of at most $N$ terms). Note that $\Sigma_N$ is not a linear subspace since $\Sigma_N+\Sigma_N=\Sigma_{2N}$. That is why the approximation of a signal $f$ by elements of $\Sigma_N$ is often  referred to as non-linear approximation.
Given a function $f\in\lrd$, the $N$-term approximation error in $\lrd$ is 
\begin{equation}\label{sigma}
\sigma_N(f)=\inf_{p\in\Sigma_N} \|f-p\|_2.
\end{equation}
That is, $\sigma_N(f)$ is the error produced when $f$ is approximated optimally by a linear combination of $N$ Gabor atoms.  

We shall show in Corollary \ref{bo1} that, if $f\in \bigcap_{\gamma>0} M^\gamma(\rd)$, then,  for every $r>0$ there exists a positive constant $C=C(r,f)$ such that 

$$\sigma_N(f)\leq C N^{-r}.
$$

Since $\sigma_N(f)$ is the error produced when $f$ is approximated optimally by a linear combination of $N$ Gabor atoms, the decay above shows the high compression of the eigenfunction onto such atoms, cf. \cite[Subsection 12.4]{Grochenig_2001_Foundations}.\par
Another main result (Theorem \ref{main2}) states, in the same spirit, that for symbols in the weighted modulation space $M^{\infty}_{v_s\otimes 1} (\rdd)$, $s>0$ (see Definition \ref{def2.4} below), the corresponding $L^2$ eigenfunctions of $\aaf$, with eigenvalues $\lambda\not=0$, are actually in $\cS(\rd)$. Also in this case $\aaf$ is a compact operator, since $M^{\infty}_{v_s\otimes 1} (\rdd) \subset M^{p,\infty}(\rdd)$, whenever $p>2d/s$, cf. the subsequent Remark \ref{compact}. These symbol classes include certain measures and the result applies, in particular, to Gabor multipliers (see e.g., \cite{FGM2003}). We leave the precise statement to the interested reader.\par
In short, the paper is organized as follows.  Section 2 is devoted to the function spaces involved in our study. In particular, we show new multiplication relations for Wiener amalgam spaces in the quasi-Banach setting and we prove convolution relations for quasi-Banach modulation spaces. Section 3 represents the core of the paper. 
 We first exhibit continuity results for Weyl operators on modulation spaces (involving the quasi-Banach setting), cf. Theorem \ref{Charpseudo}. Then we study the eigenfunctions' properties for Weyl operators (Propositions \ref{eigenfunctWeyl} and \ref{eigenfunctWeyl2} below). 
 Next we show our main result on the eigenfunctions' regularity and smoothness of localization operators: Theorem \ref{main} and related consequences in terms of the $N$-term approximation (Proposition \ref{approx} and Corollary \ref{bo1}).
 The last Section $4$ is devoted to the study of eigenfunctions of localization operators with symbols in the weighted  Lebesgue spaces $L^q_m(\rd)$, $1\leq q<\infty$. 
 This  section suggests a wider study of the topic for localization operators on groups \cite{WongLocalization}. We shall pursue this issue in a subsequent paper.
 Let us conclude this introduction with another open problem, that can be worth investigating.
 Namely, in this paper we work with compact localization operators having symbols $a$ in $M^{\infty}_{v_s\otimes1}(\rdd)$, for $s>0$, or in the larger space  $M^{p,\infty}(\rdd)$, $0<p<\infty$. Notice that the symbol classes $M^{p,\infty}(\rdd)$ do not exhaust the class of compact localization operators $\aaf$, characterized in \cite[Theorem 3.15]{FG2006} (see also the contributions \cite{FG2007,FGP2017}); in fact, the localization operators object of study  belong to the Schatten class $S_p$, $p<\infty$ (see the characterization in \cite[Theorem 1]{ECSchattenloc2005}), that is a proper linear subspace of the space of compact operators. More comments on this topic are contained in the subsequent Remark \ref{schatten-comp}.

\section{Preliminaries}
 \textbf{Notation.} We define   $xy=x\cdot y$,  the scalar product on $\Ren$. The Schwartz class is denoted by  $\sch(\Ren)$, the space of tempered
  distributions by  $\sch'(\Ren)$.   We use the brackets  $\la
  f,g\ra$ to denote the extension to $\sch' (\Ren)\times\sch (\Ren)$ of
  the inner product $\la f,g\ra=\int f(t){\overline {g(t)}}dt$ on
  $L^2(\Ren)$. 
  The Fourier transform of a function $f$ on $\rd$ is normalized as
  \[
  \Fur f(\o)= \int_{\rd} e^{-2\pi i x\o} f(x)\, dx.
  \]
 The involution $g^*$ is given by $g^*(t) =
 \overline{g(-t) }$. Given a measurable and positive weight function $m$  on $\rdd$ and $1\leq p< \infty$, we denote by $L^p_m(\rd)$  the Banach space of measurable functions $f: \rd \to \bC$ satisfying 
 $$\|f\|_{L^p_m}=\left(\intrd |f(x)|^p m(x)^p\, dx\right)^{1/p},$$
 up to the equivalence relation $f\sim g$ if and only if $f(x)=g(x)\,$ for  a.e. $x$.
For $p=\infty$, we have $f\in L^\infty_m(\rd)$ if $\|f\|_{L^\infty_m }=$ ess sup$_{x\in\rd} |f(x)| m(x)<\infty$, again up to the equivalence relation above. For $s\in\bR$, recall the Sobolev spaces $H^s(\Ren)=\{f\,:\,\hat{f}(\o)\la \o\ra^s\in
L^2(\Ren)\},$ where $\la \o\ra^s =(1+|\o|^2)^{s/2}$.

We use $T^*$  for the adjoint of an operator $T$. Observe that  $\pi(z)^\ast=\pi(-z)$ and the following commutation relations hold 
\begin{equation}\label{CR}
\pi(z)\pi(w)=e^{-2\pi i z_1 w_2}\pi(w)\pi(z).
\end{equation}
For $f,g\in\lrd$, the cross-Wigner distribution is defined by:
\begin{equation}\label{CWD}
W(f,g)\phas=\intrd f(x+\frac t2)\overline{g(x-\frac t2)}e^{-2\pi i t\o}\,dt.
\end{equation} 
Given two normed spaces $A$ and $B$, we denote by $A\hookrightarrow B$ the continuous embedding of $A$ into $B$.

Finally we recall the definition of the Schatten classes $S_p$, $0<p<\infty$. The singular values
$\{s_k(L)\}_{k=1}^\infty$ of a compact operator $L$ on a Hilbert space are the eigenvalues of
the positive self-adjoint operator $(L^*L)^{1/2}$. The Schatten class $S_p$ consists
of all compact operators whose singular values lie in $\ell^p$.

\subsection{Weight functions}
In the sequel $v$ will always be a
continuous, positive,  submultiplicative  weight function on $\rd$ (or on $\zd$), i.e., 
$ v(z_1+z_2)\leq v(z_1)v(z_2)$, for all $ z_1,z_2\in\Ren$ (or for all $z_1,z_2\in \zd$).
We say that $m\in \mathcal{M}_v(\rd)$ (or $m\in \mathcal{M}_v(\zd)$) if $m$ is a positive, continuous  weight function  on $\Ren$ (or on $\zd$) {\it
	$v$-moderate}:
$ m(z_1+z_2)\leq Cv(z_1)m(z_2)$  for all $z_1,z_2\in\Ren$ (or for all $z_1,z_2\in \zd$).
We will mainly work with polynomial weights of the type
\begin{equation}\label{vs}
v_s(z)=\la z\ra^s =(1+|z|^2)^{s/2},\quad s\in\bR,\quad z\in\rd\,\, (\mbox{or}\, \zd).
\end{equation}
Observe that,  for $s<0$, $v_s$ is $v_{|s|}$-moderate.\par 
Given two weight functions $m_1,m_2$ on $\rd$ (or $\zd$), we write $$(m_1\otimes m_2)(x,\o)=m_1(x)m_2(\o),\quad x,\o\in \rd \quad (\mbox{or}\, \zd).$$

\subsection{Spaces of sequences}
\begin{definition}\label{ellp}
	For $0<p,q\leq \infty$, $m\in \mathcal{M}_v(\zdd)$, the space $\ell^{p,q}_m(\zdd)$ consists of all sequences $a=(a_{k,n})_{k,n\in\zd}$ for which the (quasi-)norm 
	$$\|a\|_{\ell^{p,q}_m}=\left(\sum_{n\in\zd}\left(\sum_{k\in\zd}|a_{k,n}|^p m(k,n)^p\right)^{\frac qp}\right)^{\frac 1q}
	$$
	(with obvious modification for $p=\infty$ or $q=\infty$) is finite.
\end{definition}

For $p=q$, $\ell^{p,q}_m(\zdd)=\ell^p_m(\zdd)$, the standard spaces of sequences.  Namely, in dimension $d$, 
 for $0<p\leq \infty$, $m$ a weight function on $\zd$, a sequence $a=(a_k)_{k\in\zd}$ is in $\ell^p_m(\zd)$ if
 $$\|a\|_{\ell^{p}_m}=\left(\sum_{k\in\zd}|a_{k}|^p m(k)^p\right)^{\frac 1p}<\infty.
 $$
 
 Here there are some properties we need in the sequel \cite{Galperin2014,Galperin2004}:
\begin{itemize}
	\item[(i)] \emph{Inclusion relations}: If $0<p_1\leq p_2 \leq\infty$, then $\ell^{p_1}_m(\zd)\hookrightarrow\ell^{p_2}_m(\zd)$, for any positive weight function $m$ on $\zd$.
	\item[(ii)] \emph{Young's convolution inequality}: Consider $m\in\mathcal{M}_v(\zd)$, $0<p,q,r\leq \infty$ with
	\begin{equation}\label{Yrgrande1}
	\frac1p+\frac1q=1+\frac1r, \quad \mbox{for}\quad 1\leq r\leq \infty
	\end{equation}
	and
	\begin{equation}\label{Yrminor1}
	p=q=r, \quad \mbox{for}\quad 0<r<1.
	\end{equation}
 Then for all $a\in \ell^p_m(\zd)$ and $b\in\ell^q_v(\zd)$, we have $a\ast b\in \ell^r_m(\zd)$, with
	\begin{equation}
	\|a\ast b\|_{\ell^r_m}\leq C \|a\|_{\ell^p_m}\|b\|_{\ell^q_v},
	\end{equation}
	where $C$ is independent of $p,q,r$, $a$ and $b$. If $m\equiv v\equiv1$, then $C=1$. 
	\item[(iii)] \emph{H\"{o}lder's inequality}: For any positive weight function $m$ on $\zd$, $0<p,q,r\leq \infty$, with $1/p+1/q=1/r$, 
	\begin{equation}\label{ptwellp}
	\ell^p_m (\zd)\cdot \ell^q_{1/m}(\zd)\hookrightarrow \ell^r(\zd),
	\end{equation}
	where the symbol $\hookrightarrow$ denotes that the inclusion is a  continuous mapping.
\end{itemize}
\subsection{Wiener Amalgam Spaces \cite{Feichtinger_1981_Banach,feichtinger-wiener-type,Feichtinger_1990_Generalized,Fournier_1985_Amalgams,Galperin2004,Heil-amalgam,Rauhut2007Coorbit,Rauhut2007Winer}.}

\begin{definition}\label{C2defWLpq}
	Consider $p,q\in (0,\infty]$, a weight function $m\in\mathcal{M}_{v}$
	and the compact set  $Q=[0,1]^d$. The Wiener amalgam
	space $W\left(L^{p},L_{m}^{q}\right)(\rd)$ consists of the functions $f\,:\,\mathbb{R}^{d}\rightarrow\mathbb{C}$
	such that $f\in L_{{loc}}^{p}(\rd)$ and for which the \emph{control
		function}:
	\begin{equation}\label{C2controlF}
	F^{Q}_f\left(k\right):=\left\Vert f\cdot T_{k}\chi_{Q}\right\Vert _{L^{p}}\in \ell_{m}^{q}\left(\mathbb{Z}^{d}\right),\qquad k\in\mathbb{Z}^{d}.
	\end{equation}
	The quasi-norm on $W\left(L^{p},L_{m}^{q}\right)$ is given by
	\begin{align}
	\left\Vert f\right\Vert _{W\left(L^{p},L_{m}^{q}\right)}&:=\left\Vert F_{f}^{Q}\left(k\right)\right\Vert _{\ell_{m}^{q}}\notag\\
	&=\left\Vert \left\Vert f\cdot T_{k}\chi_{Q}\right\Vert _{L^{p}}\right\Vert _{\ell_{m}^{q}}\notag\\
	&=\left(\sum_{k\in \mathbb{Z}^{d}}\left(\int_{\mathbb{R}^{d}}\left|f\left(t\right)\right|^{p}\chi_{Q}\left(t-k\right)\mathrm{d}t\right)^{\frac{q}{p}}m^{q}\left(k\right)\right)^{\frac{1}{q}}\label{C2normWLpLq},
	\end{align}
	with suitable adjustments for the cases $p,q=\infty$. 
\end{definition}

	This special definition allows us to grasp the sense of the amalgam:
		we first view $f$ \textquotedblleft locally\textquotedblright\, through translations $T_{k}\chi_{Q}$
		of the sharp cutoff function $\chi_{Q}$, and measure those
		local pieces in the $L^{p}$-norm, then we measure the global behavior
		of those local pieces according to the $\ell_{m}^{q}$-norm. The \textquotedblleft window\textquotedblright\,
		through which we view $f$ locally need not be a unit $d$-dimensional cube, cf. \cite{feichtinger-wiener-type,Galperin2004,Heil-amalgam,Rauhut2007Winer}. 
	In the sequel we shall use 	the following properties:
	\begin{itemize}
		\item [(i)] Inclusion relations: For $0<p_1\leq p_2\leq \infty$,  $0<q_2\leq q_1\leq \infty$, we have
		\begin{equation}\label{inclusionW}
		W(L^{p_2}, L^{q_2}_m)(\rd)\hookrightarrow W(L^{p_1}, L^{q_1}_m)(\rd).
		\end{equation}
		\item [(ii)] Convolution relations (for the quasi-Banach case see \cite[Lemma 2.9]{Galperin2004}): Consider $m_i\in\mathcal{M}_v$, $0<p_i,q_i\leq \infty$, $i\in \{1,2,3\}$, and $p_3\geq 1$.  Assume that $L^{p_1} \ast L^{p_2} \hookrightarrow L^{p_3}$ and $\ell^{q_1}_{m_1}\ast \ell^{q_2}_{m_2} \hookrightarrow \ell^{q_3}_{m_3}$, then
		\begin{equation}\label{convWiener}
		W(L^{p_1}, L^{q_1}_{m_1})\ast W(L^{p_2}, L^{q_2}_{m_2})\hookrightarrow W(L^{p_3}, L^{q_3}_{m_1}).
		\end{equation}
		\item[(iii)] For $m\in\mathcal{M}_v$, $0<p\leq\infty$, we have \begin{equation}\label{pWiener}
		L^p_m=W(L^p,L^p_m).
		\end{equation}
	\end{itemize}
	\begin{proposition}[Multiplication relations]\label{MR}
	 Consider $m,w\in\mathcal{M}_v$, $0<p_i,q_i\leq \infty$, $i=\{1,2,3\}.$ Assume $\frac 1{p_1}+\frac 1{p_2}=\frac 1{p_3}$ and $\frac 1{q_1}+ \frac 1{q_2} =\frac 1{q_3}$, then
		\begin{equation}\label{prodWiener}
		W(L^{p_1}, L^{q_1}_{m})\cdot W(L^{p_2}, L^{q_2}_{w/m})\hookrightarrow W(L^{p_3}, L^{q_3}_w).
		\end{equation}
	\end{proposition}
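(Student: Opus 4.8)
The plan is to reduce everything to the level of the control functions: I would estimate the control function of the product $fg$ pointwise in the lattice index $k\in\zd$ by the product of the control functions of $f$ and $g$, and then combine these two sequences via a weighted H\"older inequality. The crucial elementary observation is that the cutoff $\chi_Q$ is idempotent, $\chi_Q\cdot\chi_Q=\chi_Q$, hence $T_k\chi_Q\cdot T_k\chi_Q=T_k\chi_Q$, and therefore
\[
(fg)\cdot T_k\chi_Q=\big(f\cdot T_k\chi_Q\big)\big(g\cdot T_k\chi_Q\big),\qquad k\in\zd.
\]
This factorizes each local piece of $fg$ into the corresponding local pieces of $f$ and $g$.

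First I would apply the generalized H\"older inequality in $L^p(\rd)$. Since $\tfrac1{p_1}+\tfrac1{p_2}=\tfrac1{p_3}$ forces $p_3\le p_1,p_2$, the exponents $p_1/p_3$ and $p_2/p_3$ are conjugate and $\ge 1$, so after raising to the power $p_3$ the estimate $\|uv\|_{L^{p_3}}\le\|u\|_{L^{p_1}}\|v\|_{L^{p_2}}$ reduces to the classical H\"older inequality; this stays valid for the whole range $0<p_i\le\infty$, with the usual endpoint conventions. Applying it with $u=f\cdot T_k\chi_Q$ and $v=g\cdot T_k\chi_Q$ and using the factorization above gives, for every $k\in\zd$,
\[
F^Q_{fg}(k)=\big\|(fg)\cdot T_k\chi_Q\big\|_{L^{p_3}}\le\big\|f\cdot T_k\chi_Q\big\|_{L^{p_1}}\,\big\|g\cdot T_k\chi_Q\big\|_{L^{p_2}}=:F^Q_f(k)\,G^Q_g(k),
\]
where $F^Q_f$ and $G^Q_g$ denote the $L^{p_1}$- and $L^{p_2}$-control functions of $f$ and $g$.

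Then I would take the $\ell^{q_3}_w$-quasi-norm of both sides. Splitting the weight as $w=m\cdot(w/m)$ and invoking the H\"older inequality for sequence spaces \eqref{ptwellp} with trivial weight (i.e. $\ell^{q_1}\cdot\ell^{q_2}\hookrightarrow\ell^{q_3}$, legitimate since $\tfrac1{q_1}+\tfrac1{q_2}=\tfrac1{q_3}$), I obtain
\[
\big\|F^Q_f\cdot G^Q_g\big\|_{\ell^{q_3}_w}=\big\|(F^Q_f\,m)\,(G^Q_g\,(w/m))\big\|_{\ell^{q_3}}\le\big\|F^Q_f\big\|_{\ell^{q_1}_m}\,\big\|G^Q_g\big\|_{\ell^{q_2}_{w/m}}.
\]
By Definition \ref{C2defWLpq} the right-hand side is exactly $\|f\|_{W(L^{p_1},L^{q_1}_m)}\,\|g\|_{W(L^{p_2},L^{q_2}_{w/m})}$, so combining with the pointwise bound $F^Q_{fg}\le F^Q_f\,G^Q_g$ yields
\[
\|fg\|_{W(L^{p_3},L^{q_3}_w)}=\big\|F^Q_{fg}\big\|_{\ell^{q_3}_w}\le\|f\|_{W(L^{p_1},L^{q_1}_m)}\,\|g\|_{W(L^{p_2},L^{q_2}_{w/m})},
\]
which is the asserted embedding \eqref{prodWiener}.

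The argument is structurally short, so the only genuine obstacle is making sure each step survives below the Banach threshold $0<p_i,q_i<1$, rather than any deep estimate. Both H\"older inequalities do survive, precisely because each reduces after a power substitution to a classical H\"older inequality with conjugate exponents $\ge 1$; this is where the quasi-Banach subtlety is concentrated. Two minor bookkeeping points remain: one should note that $w/m$ is an admissible (moderate) weight so that $W(L^{p_2},L^{q_2}_{w/m})$ is well defined, which holds since the reciprocal of a $v$-moderate weight is $\tilde v$-moderate (with $\tilde v(\cdot)=v(-\cdot)$) and products of moderate weights are moderate; and the endpoint cases $p_i=\infty$ or $q_i=\infty$ require only the standard modifications already built into the definitions of the $L^p$- and $\ell^q$-(quasi-)norms.
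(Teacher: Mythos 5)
Your proposal is correct and follows essentially the same route as the paper's own proof: factorize each local piece via $T_k\chi_Q\cdot T_k\chi_Q=T_k\chi_Q$, apply the generalized H\"older inequality in $L^{p}$ (valid for exponents in $(0,\infty]$), and then the weighted sequence H\"older inequality after splitting $w=m\cdot(w/m)$. Your added remarks on why H\"older survives below the Banach threshold and on the admissibility of the weight $w/m$ are sensible elaborations of steps the paper leaves implicit, but they do not change the argument.
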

	\begin{proof} The result is well known for $1\leq p_i,q_i\leq \infty$, cf. \cite{Feichtinger_1981_Banach, Heil-amalgam}. Here we show that the same proof works for quasi-Banach spaces. Indeed, since the standard H\"older inequality holds for Lebesgue exponents in $(0,+\infty]$, for $f_1\in 	W(L^{p_1}, L^{q_1}_{m})$, $f_2\in 	W(L^{p_2}, L^{q_2}_{w/m})$ we have
$$
	\|f_1f_2 T_k\chi_Q\|_{L^{p_3}}=\|(f_1T_k\chi_Q)(f_2 T_k\chi_Q)\|_{L^{p_3}}\leq \|f_1 T_k\chi_Q\|_{L^{p_1}}\|\|f_2 T_k\chi_Q\|_{L^{p_2}}.
$$
   Defining $a_k=\|f_1 T_k\chi_Q\|_{p_1}$ and $b_k=\|f_2 T_k\chi_Q\|_{p_2}$ and  using  H\"{o}lder's inequality for sequences $\ell^{q_1}\ell^{q_2}\hookrightarrow\ell^{q_3}$, for $1/q_1+1/q_2=1/q_3$ ($0<q_i\leq \infty$, $i=1,2,3$), we obtain $$\|a_k b_k w(k)\|_{\ell^{q_3}}=\|(a_k m(k))( b_k w(k)/m(k))\|_{\ell^{q_3}}\leq \|a_k m(k)\|_{\ell^{q_1}}\|b_k w(k)/m(k)\|_{\ell^{q_2}}.$$
   This completes the proof.
	\end{proof}

	\subsection{Modulation Spaces} We use the extension to quasi-Banach spaces introduced first by Y.V. Galperin and S. Samarah in \cite{Galperin2004}.
\begin{definition}\label{def2.4}
	Fix a non-zero window $g\in\cS(\rd)$, a weight $m\in\mathcal{M}_v$ and $0<p,q\leq \infty$. The modulation space $M^{p,q}_m(\rd)$ consists of all tempered distributions $f\in\cS'(\rd)$ such that the (quasi-)norm 
	\begin{equation}\label{norm-mod}
	\|f\|_{M^{p,q}_m}=\|V_gf\|_{L^{p,q}_m}=\left(\intrd\left(\intrd |V_g f \phas|^p m\phas^p dx  \right)^{\frac qp}d\o\right)^\frac1q 
	\end{equation}
	(obvious changes with $p=\infty$ or $q=\infty)$ is finite. 
\end{definition}

The most famous modulation spaces  are those  $M^{p,q}_m(\rd)$ with $1\leq p,q\leq \infty$, invented by H. Feichtinger in \cite{feichtinger-modulation}. In that paper he proved  they are Banach spaces, whose norm does not depend on the window $g$, in the sense that different window functions in $\cS(\rd)$ yield equivalent norms. Moreover, the window class $\cS(\rd)$ can be extended  to the modulation space  $M^{1,1}_v(\rd)$ (so-called Feichtinger algebra). 

For shortness, we write $M^p_m(\rd)$ in place of $M^{p,p}_m(\rd)$ and $M^{p,q}(\rd)$ if $m\equiv 1$.

The modulation spaces $M^{p,q}_m(\rd)$, $0<p,q<1$, where  introduced almost twenty years later by Y.V. Galperin and S. Samarah in \cite{Galperin2004} and then studied in \cite{Kobayashi2006,Rauhut2007Coorbit,ToftquasiBanach2017,Wangbook2011} (see also references therein).
In this framework, it appears that the largest natural class of windows universally admissible for all spaces $M^{p,q}_m(\rd)$, $0<p,q\leq \infty$ (with $m$ having at most polynomial growth) is the Schwartz class $\cS(\rd)$.
There are thousands of papers involving modulation spaces with indices $1\leq p,q\leq\infty$, whereas very few works deal with the  quasi-Banach case $0<p,q\leq 1$. Indeed, many properties related to the latter case are still unexplored. 

In this paper their contribution is fundamental, since they are the key tool for  understanding the properties of eigenfunctions of localization operators having symbols with some decay at infinity, measured in the $L^p$-mean, $0< p<\infty$. \par 
We observe that for any $0<p\leq \infty$, the window function $g$ can be chosen in a bigger space than the Schwartz class $\cS(\rd)$, see below \cite[Proposition 1.2 (1)]{ToftquasiBanach2017}.
\begin{prop}\label{finestre}
	Consider $m\in\mathcal{M}_v$,  $r\in (0,1]$, $0<p\leq \infty$ such that  $r\leq p$.	Then the following is true: if $g\in M_v^r(\rd)\setminus\{0\}$,  then $f \in M^p_m(\rd)$
 if and only if \eqref{norm-mod} is finite. In particular, $M^p_m(\rd)$ 
	 is independent of the choice of $g\in M_v^r(\rd)\setminus\{0\}$.
	Moreover,  it is a quasi-Banach space under the quasi-norm in \eqref{norm-mod}, and different choices of $g$ give rise to equivalent quasi-norms.
\end{prop}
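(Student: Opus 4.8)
The plan is to prove everything from a single two-sided comparison of the quasi-norms $\|V_{g_1}f\|_{L^p_m}$ and $\|V_{g_2}f\|_{L^p_m}$ for arbitrary windows $g_1,g_2\in M_v^r(\rd)\setminus\{0\}$; specializing $g_2$ to a fixed Schwartz window then yields both the ``if and only if'' characterization (membership in $M^p_m$ being defined through a Schwartz window) and the independence of the window, while the equivalence of quasi-norms is exactly the two-sided bound. The engine is the \emph{change-of-window inequality} (fundamental identity of time-frequency analysis): since $\langle g_1,g_1\rangle=\|g_1\|_2^2\neq0$, one has the pointwise estimate
\begin{equation}\label{eq:fundTFA}
|V_{g_2}f(z)|\leq \frac{1}{\|g_1\|_2^2}\,\big(|V_{g_1}f|\ast|V_{g_2}g_1|\big)(z),\qquad z\in\rdd .
\end{equation}
As $g_1,g_2\in M_v^r\hookrightarrow L^2$ (because $r\le1\le2$), the cross-transform $V_{g_2}g_1$ is a bounded continuous function; for genuinely rough windows the transforms $V_{g_i}f$ of the distribution $f$ and the identity \eqref{eq:fundTFA} are first established for $g_1,g_2\in\cS(\rd)$ and then extended by the density of $\cS(\rd)$ in $M_v^r(\rd)$ (valid since $r<\infty$) together with the quasi-norm bounds obtained below.

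The second ingredient is the weighted \emph{quasi-Banach Young inequality} on $\rdd$, the continuous counterpart of property (ii) for the sequence spaces: for $m\in\mathcal{M}_v$ one has $L^p_m\ast L^p_v\hookrightarrow L^p_m$ when $0<p\le1$ (the diagonal case \eqref{Yrminor1}) and $L^p_m\ast L^1_v\hookrightarrow L^p_m$ when $p\ge1$. Writing $\rho=\min\{p,1\}$ and applying this to \eqref{eq:fundTFA} gives
$$\|V_{g_2}f\|_{L^p_m}\;\le\; C\,\|V_{g_1}f\|_{L^p_m}\,\|V_{g_2}g_1\|_{L^\rho_v},$$
so the entire statement is reduced to the auxiliary claim that the cross-transform of the two windows satisfies $V_{g_2}g_1\in L^\rho_v(\rdd)$.

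I expect this auxiliary claim to be the main obstacle, and it is precisely where the hypotheses $r\in(0,1]$ and $r\le p$ are consumed (equivalently $r\le\rho$). Applying \eqref{eq:fundTFA} once more with $f=g_1$, reference and auxiliary window a fixed Schwartz $\phi_0$, and new window $g_2$, gives $|V_{g_2}g_1|\le \|\phi_0\|_2^{-2}\,(|V_{\phi_0}g_1|\ast|V_{g_2}\phi_0|)$. By the inclusion $M_v^r\hookrightarrow M_v^{\rho}$ (valid because $r\le\rho$, the modulation-space analogue of the sequence inclusion (i)) both $g_1,g_2$ lie in $M_v^{\rho}$, whence $V_{\phi_0}g_1\in L^\rho_v$ and, using $|V_{g_2}\phi_0(z)|=|V_{\phi_0}g_2(-z)|$ together with the evenness of $v_s$, also $V_{g_2}\phi_0\in L^\rho_v$. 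Since $0<\rho\le1$, the diagonal quasi-Banach Young inequality $L^\rho_v\ast L^\rho_v\hookrightarrow L^\rho_v$ (using submultiplicativity of $v$) yields $V_{g_2}g_1\in L^\rho_v$, proving the claim. Combining the displayed estimate with its mirror image (swap $g_1\leftrightarrow g_2$) produces the equivalence $\|V_{g_1}f\|_{L^p_m}\asymp\|V_{g_2}f\|_{L^p_m}$, hence the window-independence and the equivalence of quasi-norms; taking $g_2\in\cS(\rd)$ gives the ``if and only if''.

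Finally, for the quasi-Banach property I would argue that $V_{g_0}$ is, by definition, a quasi-isometry of $M^p_m(\rd)$ into the complete quasi-normed space $L^p_m(\rdd)$, so completeness reduces to closedness of the range. A Cauchy sequence $(f_n)$ in $M^p_m$ has $(V_{g_0}f_n)$ Cauchy, hence convergent in $L^p_m$ to some $F$, while $(f_n)$ converges in $\cS'(\rd)$ (via the continuous embedding $M^p_m\hookrightarrow\cS'$) to some $f$; continuity of the duality identifies $V_{g_0}f=F$, and the STFT inversion/reproducing formula shows that $F$ lies in the range of $V_{g_0}$, so $f\in M^p_m$ and $f_n\to f$ in quasi-norm.
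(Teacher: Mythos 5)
The paper does not actually prove this proposition; it is quoted from Toft \cite[Proposition 1.2 (1)]{ToftquasiBanach2017}, so the only fair comparison is with the standard argument in that reference (and in Galperin--Samarah), whose architecture your proposal correctly reproduces: change-of-window inequality, reduction to $V_{g_2}g_1$ lying in a convolution-friendly space, and a completeness argument via the embedding into $\cS'(\rd)$.

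There is, however, a genuine gap at the engine of your argument: the ``weighted quasi-Banach Young inequality'' $L^p_m\ast L^p_v\hookrightarrow L^p_m$ on $\rdd$ for $0<p\le 1$ is \emph{false} for plain Lebesgue spaces. Unlike the sequence case (where $\ell^p\subset\ell^1$ for $p\le 1$), a function in $L^p(\rdd)$ with $p<1$ need not be locally integrable, and e.g.\ $f=|x|^{-\alpha}\chi_{\{|x|\le 1\}}$ with $2d<\alpha<2d/p$ lies in $L^p$ while $f\ast f\equiv+\infty$; so the convolution in your main estimate and in the auxiliary claim about $V_{g_2}g_1$ is not even defined at the level of generality you invoke. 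The reason the argument nevertheless goes through in the literature is that STFTs are not arbitrary $L^p$ functions: they are continuous and belong to the Wiener amalgam space $W(L^\infty,L^p_m)$ (this is exactly Theorem \ref{G33} of the paper), and the correct substitute for Young's inequality is the amalgam convolution relation \eqref{convWiener}, $W(L^\infty,L^{p}_{m})\ast W(L^\infty,L^{\rho}_{v})\hookrightarrow W(L^\infty,L^{p}_{m})\hookrightarrow L^p_m$, whose $\ell$-component is the discrete Young inequality you cite and whose local component is $L^\infty\ast L^\infty\hookrightarrow L^\infty$ on compacta. Your proof can be repaired by systematically replacing every occurrence of ``$V\in L^\rho_v$'' by ``$V\in W(L^\infty,L^\rho_v)$'' (first for Schwartz windows, where Theorem \ref{G33} applies directly, then for $M^r_v$ windows by the same bootstrapping you describe), but as written the central convolution step fails, and this is precisely the point where the quasi-Banach theory genuinely differs from the Banach case $p\ge 1$.
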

In the sequel we shall use inclusion relations for modulation spaces, we refer to \cite[Theorem 3.4]{Galperin2004} and \cite[Theorem 12.2.2]{Grochenig_2001_Foundations} for the proof of the following result.
\begin{theorem}\label{inclusionG}
	Let $m\in\mathcal{M}_v(\rdd)$. If $0<p_1\leq p_2\leq \infty$ and $0<q_1\leq q_2\leq \infty$ then $M^{p_1,q_1}_m(\rd)\hookrightarrow M^{p_2,q_2}_m(\rd)$.
\end{theorem}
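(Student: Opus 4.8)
The plan is to reduce the statement to the elementary monotonicity of the discrete (quasi-)norms on $\ell^{p,q}_m(\zdd)$, by passing through the Wiener amalgam description of the short-time Fourier transform. First I would fix once and for all a nonzero window $g\in\cS(\rd)$, normalized so that $\norm{g}_2=1$; by Proposition \ref{finestre} this entails no loss of generality, since all the quasi-norms \eqref{norm-mod} arising from admissible windows are equivalent. The whole argument then rests on the equivalence, valid for every pair $0<p,q\le\infty$,
\begin{equation*}
\norm{V_gf}_{L^{p,q}_m}\asymp\norm{V_gf}_{W(L^\infty,L^{p,q}_m)},
\end{equation*}
where the amalgam on the right has local component $L^\infty$ and global (mixed, weighted) component $L^{p,q}_m$, i.e.\ its quasi-norm is the $\ell^{p,q}_m(\zdd)$-quasi-norm of the sequence of local suprema $\big(\sup_{z\in(k,n)+Q}\abs{V_gf(z)}\big)_{k,n}$, with $Q=[0,1]^{2d}$. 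Granting this, the inclusion follows from the chain
\begin{equation*}
\norm{f}_{M^{p_2,q_2}_m}\asymp\norm{V_gf}_{W(L^\infty,L^{p_2,q_2}_m)}\le\norm{V_gf}_{W(L^\infty,L^{p_1,q_1}_m)}\asymp\norm{f}_{M^{p_1,q_1}_m},
\end{equation*}
whose middle inequality is just the monotonicity $\ell^{p_1,q_1}_m(\zdd)\hookrightarrow\ell^{p_2,q_2}_m(\zdd)$ applied to the common control sequence of local suprema; this is immediate from the inclusion relations (i) for the sequence spaces, since $p_1\le p_2$ and $q_1\le q_2$.

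It remains to prove the two directions of the equivalence. The direction $\norm{V_gf}_{L^{p,q}_m}\lesssim\norm{V_gf}_{W(L^\infty,L^{p,q}_m)}$ is elementary: on each translated cube $(k,n)+Q$ one has $\norm{V_gf}_{L^{p}((k,n)+Q)}\le \abs{Q}^{1/p}\sup_{(k,n)+Q}\abs{V_gf}$, and, using that $m$ is $v$-moderate so that $m(z)$ is comparable to $m(k,n)$ for $z\in(k,n)+Q$, reassembling these local estimates in the mixed $\ell^{p,q}_m$-quasi-norm gives the claim. The reverse direction $\norm{V_gf}_{W(L^\infty,L^{p,q}_m)}\lesssim\norm{V_gf}_{L^{p,q}_m}$ is the heart of the matter and quantifies the smoothness of the short-time Fourier transform. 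Here I would invoke the orthogonality (reproducing) relations for the STFT, which for $\norm{g}_2=1$ yield the pointwise domination
\begin{equation*}
\abs{V_gf(z)}\le\big(\abs{V_gf}\ast\abs{V_gg}\big)(z),\qquad z\in\rdd,
\end{equation*}
where $\abs{V_gg}\in\cS(\rdd)$ is a fixed rapidly decreasing function, independent of $f$. Since $\abs{V_gg}$ is Schwartz, setting $\Psi(y):=\sup_{\abs{u}\le\sqrt{2d}}\abs{V_gg}(y+u)$ gives a positive $\Psi\in\cS(\rdd)$ with $\sup_{z\in(k,n)+Q}\abs{V_gf(z)}\le C\,(\abs{V_gf}\ast\Psi)(k,n)$; hence $V_gf\in W(L^\infty,L^{p,q}_m)$ as soon as $\abs{V_gf}\ast\Psi\in L^{p,q}_m$.

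The remaining, genuinely technical point is therefore the convolution estimate $\norm{\,\abs{V_gf}\ast\Psi\,}_{L^{p,q}_m}\lesssim\norm{V_gf}_{L^{p,q}_m}$ in the full range $0<p,q\le\infty$, and this is the main obstacle. For $p,q\ge1$ it is the weighted mixed-norm Young inequality with the integrable kernel $\Psi$, the $v$-moderateness of $m$ absorbing the weight. For $p<1$ or $q<1$ the ordinary Young inequality fails, and one must instead use the quasi-Banach convolution relations for Wiener amalgam spaces recalled in \eqref{convWiener} (equivalently, the quasi-Banach Young inequality in the regime \eqref{Yrminor1}), exploiting that the Schwartz kernel $\Psi$ belongs to every weighted amalgam $W(L^{r},L^{r}_{v\otimes v})$. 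Writing $L^{p,q}_m$ as an iterated amalgam and applying \eqref{convWiener} in each variable then places $\abs{V_gf}\ast\Psi$ in $W(L^\infty,L^{p,q}_m)$ with the desired control, closing this step and hence the proof. Alternatively, one may split the statement as $M^{p_1,q_1}_m\hookrightarrow M^{p_2,q_1}_m\hookrightarrow M^{p_2,q_2}_m$ and argue one index at a time, which slightly simplifies the bookkeeping of the mixed quasi-norms.
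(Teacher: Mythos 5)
Your overall architecture is the right one and is exactly what the references cited in the paper (Galperin--Samarah, Theorem 3.4; Gr\"ochenig, Theorem 12.2.2) do: since the continuous mixed norms $L^{p,q}_m$ are not nested, one passes to the amalgam/discrete description of $V_gf$ and uses the monotonicity $\ell^{p_1,q_1}_m(\zdd)\hookrightarrow\ell^{p_2,q_2}_m(\zdd)$. The first three steps (easy direction of the equivalence, pointwise domination $|V_gf|\le |V_gf|\ast|V_gg|$, passage to local suprema via $\Psi$) are all correct.

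The gap is in the step you yourself single out as the heart of the matter: the estimate $\|\,|V_gf|\ast\Psi\,\|_{W(L^\infty,L^{p,q}_m)}\lesssim\|V_gf\|_{L^{p,q}_m}$ cannot be obtained from \eqref{convWiener} when $p<1$ or $q<1$. To land in a local component $L^{p_3}=L^\infty$, \eqref{convWiener} requires $L^{p_1}\ast L^{p_2}\hookrightarrow L^\infty$ on the unit cube, and this fails for every choice of $p_2$ once $p_1=p<1$: the function $|y|^{-d}\chi_{\{|y|\le 1/2\}}$ lies in $L^p(Q)$ for $p<1$ but is not locally integrable, so its convolution with a positive bump is identically $+\infty$ near the origin. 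In other words, a \emph{generic} element of $L^{p,q}_m$ with $p<1$ does not even admit a well-defined convolution with a Schwartz kernel, so no continuous Young-type inequality can close this step; one must use that $V_gf$ is not generic. The standard repair (and the content of Theorem \ref{G33} and of \cite[Theorem 3.3]{Galperin2004}, which is where the paper outsources precisely this difficulty) is to discretize \emph{before} convolving: expand $f$ in a Gabor frame, dominate the sequence of local suprema of $V_gf$ by a \emph{discrete} convolution $|c|\ast H$ on the lattice with $H$ rapidly decreasing, and then apply the discrete Young inequality \eqref{Yrminor1}, which does survive for $p\le 1$ because $\ell^p\subset\ell^1$ on a discrete group --- the opposite of the continuous local situation. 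Given what is already available in the paper, the cleanest correct route is simply to invoke \eqref{gaborframe2} of Theorem \ref{framesmod}, which gives $\|f\|_{M^{p,q}_m}\asymp\|(V_gf(\lambda))_{\lambda}\|_{\ell^{p,q}_m(\Lambda)}$ for all $0<p,q\le\infty$, and conclude by the sequence-space inclusion (i); your splitting into $M^{p_1,q_1}_m\hookrightarrow M^{p_2,q_1}_m\hookrightarrow M^{p_2,q_2}_m$ is then unnecessary.
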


	The duality properties for modulation spaces with indices $p,q<1$ where studied in \cite{Kobayashi2007} and completed in 
	\cite[Proposition 6.4, page 163]{Wangbook2011}:  
	\begin{proposition}\label{dual}
		Let $s\in\bR$ and $0<p,q<\infty$. If $p\geq 1$ we denote $1/p+1/p'=1$ (and similarly for $q$); if $0<p<1$ we denote $p'=\infty$. Then $(M^{p,q}_{1\otimes v_s}(\rd))^\prime=M^{p',q'}_{1\otimes v_{-s}}(\rd)$.
	\end{proposition}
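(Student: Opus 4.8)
The plan is to transport the duality from the level of sequence spaces up to modulation spaces by means of a Gabor frame, thereby circumventing the failure of Hahn--Banach in the quasi-Banach range $0<p,q<1$. The point is that the continuous short-time Fourier transform realizes $M^{p,q}_{1\otimes v_s}$ inside a \emph{non-atomic} phase-space Lebesgue space, whose dual degenerates for exponents below $1$; the cure is to pass to an \emph{atomic} sequence model, where nontrivial duals survive.

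First I would fix a window $g\in\cS(\rd)$ and a lattice $\Lambda=\alpha\zd\times\beta\zd$, sufficiently dense, so that $\G(g,\Lambda)$ is a Gabor frame whose canonical dual window $\gamma$ again lies in $\cS(\rd)$, and so that the Gabor coefficient map yields the equivalence
$$\|f\|_{M^{p,q}_{1\otimes v_s}}\asymp \big\|(\langle f,\pi(\lambda)\gamma\rangle)_{\lambda\in\Lambda}\big\|_{\ell^{p,q}_{1\otimes v_s}}$$
for all $0<p,q\le\infty$ (the Gabor characterization of quasi-Banach modulation spaces from \cite{Galperin2004}, compatible with Proposition \ref{finestre}). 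Writing $C_\gamma f=(\langle f,\pi(\lambda)\gamma\rangle)_\lambda$ for analysis and $D_g c=\sum_\lambda c_\lambda\,\pi(\lambda)g$ for synthesis, one has the reproducing identity $D_gC_\gamma=\mathrm{Id}$ on $M^{p,q}_{1\otimes v_s}(\rd)$, with both operators bounded between $M^{p,q}_{1\otimes v_s}(\rd)$ and $\ell^{p,q}_{1\otimes v_s}(\zdd)$. Thus $M^{p,q}_{1\otimes v_s}(\rd)$ is realized as a retract of $\ell^{p,q}_{1\otimes v_s}(\zdd)$.

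Next I would establish the sequence-space duality $(\ell^{p,q}_{1\otimes v_s}(\zdd))'=\ell^{p',q'}_{1\otimes v_{-s}}(\zdd)$ under the stated conventions, with bilinear pairing $\langle c,b\rangle=\sum_\lambda c_\lambda b_\lambda$ and using $1/(1\otimes v_s)=1\otimes v_{-s}$ (well defined since $v_{-s}$ is $v_{|s|}$-moderate). The inclusion $\ell^{p',q'}_{1\otimes v_{-s}}\hookrightarrow(\ell^{p,q}_{1\otimes v_s})'$ follows in each index separately: in an index that is $\ge1$ by H\"older's inequality for sequences \eqref{ptwellp}, and in an index that is $<1$ by the embedding $\ell^{r}\hookrightarrow\ell^{1}$ (valid for $0<r\le1$, weights included) paired with $\ell^\infty$, which is exactly the convention $r'=\infty$. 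The reverse inclusion is where atomicity is decisive: testing a functional against the canonical unit sequences recovers its representing coefficients $b_\lambda$, and for $0<r<1$ the identity $(\ell^{r})'=\ell^{\infty}$ — in sharp contrast with the non-atomic $L^{r}$ — yields membership in $\ell^{p',q'}_{1\otimes v_{-s}}$ with the correct norm control.

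Finally I would transfer the duality through the retraction. Given $L\in(M^{p,q}_{1\otimes v_s}(\rd))'$, the composition $L\circ D_g$ is a bounded functional on $\ell^{p,q}_{1\otimes v_s}$, hence represented by a sequence $b\in\ell^{p',q'}_{1\otimes v_{-s}}$; setting $h=\sum_\lambda b_\lambda\,\pi(\lambda)\gamma$ places $h$ in $M^{p',q'}_{1\otimes v_{-s}}(\rd)$ by the Gabor equivalence, and $D_gC_\gamma=\mathrm{Id}$ gives $L(f)=\langle f,h\rangle$ for every $f$. Conversely, any $h\in M^{p',q'}_{1\otimes v_{-s}}(\rd)$ induces a bounded functional through $f\mapsto\sum_\lambda\langle f,\pi(\lambda)\gamma\rangle\,\overline{\langle h,\pi(\lambda) g\rangle}$, whose boundedness is the mixed-norm H\"older estimate above and which coincides with the extension of the $L^2$-inner product. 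The two inclusions identify $(M^{p,q}_{1\otimes v_s}(\rd))'$ with $M^{p',q'}_{1\otimes v_{-s}}(\rd)$. The main obstacle is precisely the breakdown of Hahn--Banach for $p,q<1$, forcing the detour through the atomic Gabor model; the technical heart is securing the frame equivalence and the reproducing identity $D_gC_\gamma=\mathrm{Id}$ uniformly across the full weighted quasi-Banach range, together with the consistency of the pairing under the frame decomposition.
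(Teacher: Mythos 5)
The paper does not actually prove this proposition: it is imported as a known result, with references to Kobayashi's paper on duals of modulation spaces and to Proposition 6.4 of the book by Wang--Huo--Hao--Guo, where the argument is run on the frequency-uniform (isometric) decomposition of $M^{p,q}_s$ rather than on Gabor frames. Your retraction argument is a legitimate and essentially correct alternative route: the norm equivalence and the identity $D_gC_\gamma=\mathrm{Id}$ on the full weighted quasi-Banach scale are exactly Theorem \ref{framesmod} (Galperin--Samarah), the weighted mixed-norm sequence duality $(\ell^{p,q}_{1\otimes v_s})'=\ell^{p',q'}_{1\otimes v_{-s}}$ under the stated conventions is elementary (block-by-block, using $(\ell^r_m)'=\ell^\infty_{1/m}$ for $0<r\le 1$ and H\"older otherwise, with density of finite sequences since $p,q<\infty$), and the transfer $L\mapsto L\circ D_g\mapsto b\mapsto h=D_\gamma\bar b$ correctly produces a representing element of $M^{p',q'}_{1\otimes v_{-s}}$ because the synthesis bound in Theorem \ref{framesmod} also covers the exponents $p'$ or $q'$ equal to $\infty$. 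What your route buys is uniformity: all the quasi-Banach pathology is absorbed into the atomic sequence model, where duals do not degenerate; what it costs is reliance on two nontrivial inputs that you should make explicit rather than wave at, namely (i) that a Schwartz window generating a frame has a dual window again in $\cS(\rd)$ (or, more simply, work with a Parseval frame $\gamma=g$, which suffices here), and (ii) the consistency of the discrete pairing $\sum_\lambda V_\gamma f(\lambda)\,b_\lambda$ with the extension of the $L^2$ inner product, which follows by density of $\cS(\rd)$ in $M^{p,q}_{1\otimes v_s}(\rd)$ for $p,q<\infty$ together with weak-$*$ convergence of the series defining $h$. Neither point is a gap, but both deserve a sentence in a complete write-up.
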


\begin{remark}\label{compact}
In our framework it is important to notice the following inclusion relation for $s>0$:
$$ M^{\infty}_{v_s\otimes 1} (\rdd) \subset M^{p,\infty}(\rdd)\quad  \mbox{if}\, \,p>2d/s.$$
This follows from the recent contribution \cite[Theorem 1.5]{Guo2019}. Hence localization operators $\aaf$ with symbols $a$ in $ M^{\infty}_{v_s\otimes 1} (\rdd)$ and windows $\f_1,\f_2$ in $\cS(\rd)$ are compact operators belonging to the Schatten class $S_p$, with $p>2d/s$.
\end{remark}

We will repeatedly  use the following result, cf. \cite[Theorem 3.3]{Galperin2004} (see also \cite[Theorem 12.2.1]{Grochenig_2001_Foundations} for $p\geq 1$).
\begin{theorem}\label{G33}
Assume that $m\in\mathcal{M}_v$. For $0<p<1$ let  $g$ be a non-zero window in $M^r_v(\rd)$, $r\leq p$. For $1\leq p\leq \infty$, the function $g$ can be chosen in the larger space $M^1_v(\rd)$.  If $f\in M^p_m(\rd)$, $0<p\leq \infty$,  then $V_g f\in W(L^\infty, L^p_m) $ and there exists $C>0$, independent of $f$, such that
\begin{equation}
\|V_g f\|_{W(L^\infty, L^p_m)}\leq C \|V_g f\|_{L^p_m}.
\end{equation}
\end{theorem}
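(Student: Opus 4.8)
The plan is to reduce everything to the Gaussian window and to a single local estimate that is insensitive to whether $p<1$. Write $\varphi(t)=e^{-\pi|t|^2}$ for the normalized Gaussian and identify $\rdd\cong\bC^d$; there is an entire function $F$ on $\bC^d$ (essentially the Bargmann transform of $f$) with $|V_\varphi f(z)|=|F(z)|\,e^{-\frac{\pi}{2}|z|^2}$ for $z\in\rdd$. The crux is the \emph{local sampling estimate}: for every $r>0$ there is a constant $C=C(r,p,d)>0$, independent of $z_0$ and $f$, such that
$$|V_\varphi f(z_0)|^p\le C\int_{B(z_0,r)}|V_\varphi f(w)|^p\,dw.$$
First I would prove this. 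Fixing $z_0$, the function $G(w)=F(w)\,e^{-\pi\overline{z_0}\cdot w}$ is entire, so $|G|^p$ is subharmonic and satisfies the sub-mean value inequality over $B(z_0,r)$. Using $|G(z_0)|^p=|F(z_0)|^p e^{-p\pi|z_0|^2}$, the identity $\tfrac12|w|^2-\operatorname{Re}(\overline{z_0}\cdot w)=\tfrac12|w-z_0|^2-\tfrac12|z_0|^2$, and the fact that $|F(w)|^p=|V_\varphi f(w)|^p e^{\frac{p\pi}{2}|w|^2}$, the factor $|F(w)|^p$ is replaced by $|V_\varphi f(w)|^p$ up to the bounded constant $e^{\frac{p\pi}{2}r^2}$ on $B(z_0,r)$, while the factors $e^{-\frac{p\pi}{2}|z_0|^2}$ cancel. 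This is precisely the point where the holomorphy of $F$ defeats the failure of Hölder's inequality for $p<1$; it is the genuine obstacle of the whole argument.

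Granted the local estimate, the Gaussian case is bookkeeping. For $z_0$ in the cube $Q_\kappa=\kappa+[0,1]^{2d}$ one has $B(z_0,r)\subseteq Q_\kappa^\ast:=\kappa+[-r',r']^{2d}$ with $r'=r+1$, hence $\sup_{z_0\in Q_\kappa}|V_\varphi f(z_0)|^p\le C\int_{Q_\kappa^\ast}|V_\varphi f(w)|^p\,dw$. Multiplying by $m(\kappa)^p$, using that $m$ is $v$-moderate so that $m(\kappa)\le C\,m(w)$ for $w\in Q_\kappa^\ast$, and summing over $\kappa\in\zdd$ with the bounded overlap of the enlarged cubes $Q_\kappa^\ast$, gives
$$\|V_\varphi f\|_{W(L^\infty,L^p_m)}^p=\sum_{\kappa}\Big(\sup_{Q_\kappa}|V_\varphi f|\Big)^p m(\kappa)^p\le C\int_{\rdd}|V_\varphi f(w)|^p m(w)^p\,dw=C\,\|V_\varphi f\|_{L^p_m}^p,$$
which is the assertion for the Gaussian window.

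Finally I would pass to an arbitrary admissible window $g$ by a change-of-window argument. The reproducing formula gives the pointwise bound $|V_g f|\le \|\varphi\|_2^{-2}\,(|V_\varphi f|\ast|V_g\varphi|)$. Since $|V_g\varphi|=|V_\varphi g|$ and $g\in M^r_v$ with $r\le p$ (respectively $g\in M^1_v$ when $p\ge1$), the Gaussian-window estimate just established applies to $g$ itself and places the kernel in $W(L^\infty,L^r_v)\hookrightarrow W(L^1,L^p_v)$ (respectively $W(L^\infty,L^1_v)$), using the amalgam inclusions and $r\le p$; there is no circularity, as this only invokes the Gaussian case. The Wiener amalgam convolution relation \eqref{convWiener}, with local part $L^\infty\ast L^1\hookrightarrow L^\infty$ and global part $\ell^p_m\ast\ell^p_v\hookrightarrow\ell^p_m$ (Young's inequality \eqref{Yrminor1} for $0<p<1$, \eqref{Yrgrande1} for $p\ge1$), then yields $\|V_g f\|_{W(L^\infty,L^p_m)}\le C\,\|V_\varphi f\|_{W(L^\infty,L^p_m)}$. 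Combined with the previous step and the window-independence of $\|\cdot\|_{M^p_m}=\|V_\bullet f\|_{L^p_m}$ from Proposition \ref{finestre}, together with identity \eqref{pWiener} to read $\|V_\varphi f\|_{L^p_m}$ as the relevant amalgam quantity, this closes the estimate. Once the modulated-subharmonicity trick behind the local sampling estimate is in place, everything else is the standard amalgam and quasi-Banach Young machinery already assembled in the preliminaries.
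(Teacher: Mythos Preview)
The paper does not supply its own proof of this statement; it is quoted from \cite[Theorem~3.3]{Galperin2004} (with \cite[Theorem~12.2.1]{Grochenig_2001_Foundations} covering $p\ge 1$). Your argument is correct and is in fact the same strategy as in the Galperin--Samarah reference: their Lemma~3.2 is precisely the local sampling inequality for the Gaussian window, obtained from the subharmonicity of $|F|^p$ for the Bargmann transform $F$, and Theorem~3.3 is then the summation-over-cubes step together with the change-of-window convolution bound. Your modulated-subharmonicity device with $G(w)=F(w)e^{-\pi\overline{z_0}\cdot w}$ is a clean way to make the constant in the local estimate independent of $z_0$; the alternative in the literature is to reduce to $z_0=0$ via the covariance of the STFT, which amounts to the same thing.

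Two small corrections in the final paragraph. First, for $p>1$ the global convolution $\ell^p_m\ast\ell^p_v\hookrightarrow\ell^p_m$ that you invoke is false (Young's relation \eqref{Yrgrande1} would force $p=1$). What you actually want, and what your own parenthetical ``respectively $W(L^\infty,L^1_v)$'' already sets up, is $\ell^p_m\ast\ell^1_v\hookrightarrow\ell^p_m$: embed the kernel $|V_\varphi g|\in W(L^\infty,L^1_v)\hookrightarrow W(L^1,L^1_v)$ and apply \eqref{convWiener} with local part $L^\infty\ast L^1\hookrightarrow L^\infty$ and global part $\ell^p_m\ast\ell^1_v\hookrightarrow\ell^p_m$. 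Second, the identity $|V_g\varphi|=|V_\varphi g|$ holds only after a reflection, $|V_g\varphi(z)|=|V_\varphi g(-z)|$; this is immaterial for the norm estimates since every space in sight is reflection-invariant, but it is worth writing correctly.
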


We also need to recall the inversion formula for
the STFT (see  \cite[Proposition 11.3.2]{Grochenig_2001_Foundations}): assume $g\in M^{1}_v(\rd)\setminus\{0\}$, $1\leq p,q\leq \infty$, 
$f\in M^{p,q}_m(\rd)$, then
\begin{equation}\label{invformula}
f=\frac1{\|g\|_2^2}\int_{\R^{2d}} V_g f(x,\o)M_\o T_x g\, dx\,d\o,
\end{equation}
and the  equality holds in $M^{p,q}_m(\rd)$.

\subsection{Gabor Frames}
Let $\Lambda=\alpha\zd\times \beta\zd$  be a lattice of the time-frequency plane.
 The set  of time-frequency shifts $\G(g,\Lambda)=\{\pi(\lambda)g:\
\lambda\in\Lambda\}$, for a  non-zero $g\in L^2(\rd)$, is called a
Gabor system. The set $\G(g,\Lambda)$   is
 a Gabor frame, if there exist constants $A,B>0$ such that
 \eqref{gaborframe} holds true.
 
 If $\G(g,\Lambda)$  is a Gabor frame, then it can be shown that the frame operator
 $$ Sf=\sum_{\lambda\in\Lambda}\langle f,\pi(\lambda)g\rangle\pi(\lambda)g,\quad f\in\lrd$$
 is a topological isomorphism on $\lrd$. Moreover, the system $\G(\gamma,\Lambda)$, where the function $\gamma=S^{-1}g\in\lrd$ is  the canonical dual window of  $g$, is a Gabor frame and we have the reproducing formula 
\begin{equation}\label{RF}
 f=\sum_{\lambda\in\Lambda}\langle f,\pi(\lambda)g\rangle\pi(\lambda)\gamma=\sum_{\lambda\in\Lambda}\langle f,\pi(\lambda)\gamma\rangle \pi(\lambda)g,\quad f\in\lrd
\end{equation}
with unconditional convergence in $L^2(\rd)$. In particular, if $\gamma=g$ and $\|g\|_2=1$ then $A=B=1$, the frame operator is the identity $S=I$ and the Gabor frame is called \emph{Parseval} Gabor frame. Observe that formula \eqref{Parseval} holds true.
%For simplicity, in this paper we work with Parseval Gabor frames.
 More generally, any window function $\gamma\in\lrd$, such that \eqref{RF} is satisfied, is called alternative dual window for $g$. 
 In general, given two functions $g,\gamma\in\lrd$, it is customary  to extend the notion of Gabor frame operator $S_{g,\gamma}$, related to $g,\gamma$, as follows
 $$S_{g,\gamma} f= \sum_{\lambda\in\Lambda}\langle f,\pi(\lambda)g\rangle\pi(\lambda)\gamma,\quad f\in\lrd,
 $$ 
 whenever the previous operator is well-defined. With this notation the reproducing formula \eqref{RF} can be rephrased as  $S_{g,\gamma}=I$ on $\lrd$, with $I$ being the identity operator.

Modulation spaces  provide a natural setting for time-frequency analysis, thanks to discrete equivalent norms produced by means of Gabor frames.  The key result is the following (see \cite[Chapter 12]{Grochenig_2001_Foundations} for $1\leq p,q\leq\infty$, and \cite[Theorem 3.7]{Galperin2004} for $0<p,q<1$). 
\begin{theorem}\label{framesmod}
Assume $m\in\mathcal{M}_v(\rdd)$, $\Lambda=\a \zd\times \b \zd$,  $g,\gamma\in \cS(\rd)$ such that $S_{g,\gamma}=I$ on $\lrd$. Then 
\begin{equation}\label{RFmpq}
f=\sum_{\lambda\in\Lambda}\langle f,\pi(\lambda)g\rangle\pi(\lambda)\gamma=\sum_{\lambda\in\Lambda}\langle f,\pi(\lambda)\gamma\rangle \pi(\lambda)g,\quad f\in M^{p,q}_m(\rd)
\end{equation}
with unconditional convergence in $M^{p,q}_m(\rd)$ if $0<p,q<\infty$ and with weak-* convergence in $M^\infty_{1/v}(\rd)$ otherwise. Furthermore, there are constants $0<A\leq B$ such that, for all $f\in M^{p,q}_m(\rd)$, 
 \begin{equation}\label{gaborframe2}
 A\|f\|_{M^{p,q}_m}\leq\left(\sum_{n\in\zd}\left(\sum_{k\in\zd}|\langle f,\pi(\a k,\beta n)g\rangle|^p m(\a k, \beta n)^p\right)^{\frac qp}\right)^{\frac1q}\leq B\|f\|_{M^{p,q}_m},
 \end{equation}
 independently of $p,q$, and $m$. Similar inequalities hold with $g$ replaced by $\gamma$.
\end{theorem}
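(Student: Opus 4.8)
The plan is to factor the statement through the \emph{analysis operator} $C_g f=(\langle f,\pi(\lambda)g\rangle)_{\lambda\in\Lambda}$ and the \emph{synthesis operator} $D_\gamma c=\sum_{\lambda\in\Lambda}c_\lambda\,\pi(\lambda)\gamma$, and to prove that $C_g\colon M^{p,q}_m(\rd)\to\ell^{p,q}_m$ and $D_\gamma\colon\ell^{p,q}_m\to M^{p,q}_m(\rd)$ are bounded uniformly in the whole range $0<p,q\leq\infty$, with $D_\gamma C_g=I$. Since $\langle f,\pi(\alpha k,\beta n)g\rangle=V_g f(\alpha k,\beta n)$, the upper inequality in \eqref{gaborframe2} is exactly the boundedness of $C_g$ with $B=\|C_g\|$; the lower inequality then follows by writing $\|f\|_{M^{p,q}_m}=\|D_\gamma C_g f\|_{M^{p,q}_m}\leq\|D_\gamma\|\,\|C_g f\|_{\ell^{p,q}_m}$ and setting $A=\|D_\gamma\|^{-1}$. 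Exchanging the roles of $g$ and $\gamma$ yields the analogous inequalities for $\gamma$.

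To bound $C_g$ I would use the mixed-norm extension of Theorem \ref{G33} (same proof), which places the STFT in the Wiener amalgam space, $V_g f\in W(L^\infty,L^{p,q}_m)(\rdd)$ with $\|V_g f\|_{W(L^\infty,L^{p,q}_m)}\leq C\|f\|_{M^{p,q}_m}$. Because the local $L^\infty$-component dominates the point samples, namely $|V_g f(\lambda)|\leq\sup_{z\in\lambda+Q}|V_g f(z)|$, the restriction of $V_g f$ to the grid $\Lambda$ lies in $\ell^{p,q}_m$ with norm controlled by the amalgam norm (all grids and window sizes giving equivalent Wiener amalgam norms). This sampling step, where the $W(L^\infty,\cdot)$ structure is essential, produces the upper bound $B$ uniformly in $p,q,m$.

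To bound $D_\gamma$ I would exploit that $g,\gamma\in\cS(\rd)$ forces $V_g\gamma\in\cS(\rdd)$, hence $|V_g\gamma|\in W(L^\infty,L^1_v)(\rdd)$ for every polynomial weight $v$. Since $|V_g(\pi(\lambda)\gamma)(z)|=|V_g\gamma(z-\lambda)|$, one gets the pointwise majorization $|V_g(D_\gamma c)(z)|\leq\sum_{\lambda\in\Lambda}|c_\lambda|\,|V_g\gamma(z-\lambda)|$, a semi-discrete convolution of the sequence $(c_\lambda)$ against the amalgam kernel $|V_g\gamma|$. Estimating the local suprema of the right-hand side reduces the claim to Young's inequality for weighted sequences, \eqref{Yrgrande1}--\eqref{Yrminor1}: $\ell^{p,q}_m$ is preserved under convolution with the rapidly decaying kernel sequence attached to $|V_g\gamma|$, which belongs to every $\ell^r_v$, $r>0$. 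This gives $V_g(D_\gamma c)\in W(L^\infty,L^{p,q}_m)\hookrightarrow L^{p,q}_m$, whence $\|D_\gamma c\|_{M^{p,q}_m}\leq C\|c\|_{\ell^{p,q}_m}$, uniformly.

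Finally, the reconstruction $D_\gamma C_g f=f$ holds on $L^2(\rd)$ by the hypothesis $S_{g,\gamma}=I$. For $0<p,q<\infty$ I would propagate it to $M^{p,q}_m$ by density, using that finitely supported sequences are dense in $\ell^{p,q}_m$ and $\cS(\rd)$ is dense in $M^{p,q}_m$, together with the continuity of $C_g$ and $D_\gamma$ just established; the unconditional convergence of \eqref{RFmpq} in $M^{p,q}_m$ then follows from the solidity of $\ell^{p,q}_m$ and the boundedness of $D_\gamma$. When $p=\infty$ or $q=\infty$ density fails, and I would instead read both the identity and the series in the weak-$*$ sense, testing against $M^1_v$ and invoking the duality $M^\infty_{1/v}=(M^1_v)'$ from Proposition \ref{dual}. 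The main obstacle is the uniformity across the quasi-Banach range: the synthesis bound cannot rely on the triangle inequality, so the semi-discrete convolution must be routed through the restricted Young exponents (the case $p=q=r<1$), and the mixed-norm structure when $\min\{p,q\}<1$ forces one to handle the inner and outer summations separately; likewise the endpoint reconstruction demands the weak-$*$ formulation in place of a density argument.
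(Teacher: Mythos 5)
Your proof is correct and follows essentially the same route as the sources the paper itself cites for this result (Gr\"ochenig, Ch.~12--13, and Galperin--Samarah, Thm.~3.7): boundedness of the analysis operator by sampling $V_gf\in W(L^\infty,L^{p,q}_m)$, boundedness of the synthesis operator by semi-discrete convolution against the rapidly decaying kernel $|V_g\gamma|$ with the restricted Young exponents in the quasi-Banach range, and extension of $D_\gamma C_g=I$ from $L^2$ by density (resp.\ weak-$*$ duality at the endpoints). The paper gives no proof of its own for this theorem, so there is nothing further to compare.
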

In other words, $$\|f\|_{M^{p,q}_m(\rd)}\asymp \|(\la f ,\pi(\lambda)g\ra)_\lambda \|_{\ell^{p,q}_{m}(\Lambda)}= \|(V_g f(\lambda))_\lambda\|_{\ell^{p,q}_{m}(\Lambda)}.$$
\section{Main Results}
We first study convolution relations for modulations spaces. Let us recall that, for the Banach cases, convolution relations were studied in \cite{EleCharly2003} and \cite{toft1,Toftweight2004}. Our approach is general, the techniques use Gabor frames via  the equivalence \eqref{gaborframe2}, plus  H\"{o}lder's and Young's  inequalities for sequences. 
\begin{proposition}\label{mconvmp}
	Let $\nu (\omega )>0$ be  an arbitrary  weight function on $\Ren$,  $0<
	p,q,r,t,u,\gamma\leq\infty$, with
	\begin{equation}\label{Holderindices}
	\frac 1u+\frac 1t=\frac 1\gamma,
	\end{equation}
	and 
	\begin{equation}\label{Youngindicesrbig}
	\frac1p+\frac1q=1+\frac1r,\quad \,\, \text{ for } \, 1\leq r\leq \infty
	\end{equation}
	whereas
	\begin{equation}\label{Youngindicesrbig}
	p=q=r,\quad \,\, \text{ for } \, 0<r<1.
	\end{equation}
	For $m\in\mathcal{M}_v(\rdd)$,   $m_1(x)
	= m(x,0) $ and $m_2(\omega ) = m(0,\omega )$ are the restrictions
	to $\Ren\times\{0\}$ and  $\{0\}\times\Ren$, and likewise for $v$. Then 
	\begin{equation}\label{mconvm}
	M^{p,u}_{m_1\otimes \nu}(\Ren)\ast  M^{q,t}_{v_1\otimes
		v_2\nu^{-1}}(\Ren)\hookrightarrow M^{r,\gamma}_m(\Ren)
	\end{equation}
	with  norm inequality  $$\| f\ast h \|_{M^{r,\gamma}_m}\lesssim
	\|f\|_{M^{p,u}_{m_1\otimes \nu}}\|h\|_{ M^{q,t}_{v_1\otimes
			v_2\nu^{-1}}}.$$
\end{proposition}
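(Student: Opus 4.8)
The plan is to prove the embedding at the discrete level, where Young's convolution inequality for sequences remains valid in the whole quasi-Banach range, in contrast to its continuous counterpart. Throughout I fix nonzero windows $g_1,g_2\in\cS(\rd)$ generating Gabor frames over $\Lambda=\alpha\zd\times\beta\zd$ and set $g_0:=g_1\ast g_2\in\cS(\rd)$. By the window-independence in Theorem \ref{framesmod} and the equivalence \eqref{gaborframe2}, it suffices to bound the sampled coefficient sequence $\big(V_{g_0}(f\ast h)(\alpha k,\beta n)\big)_{k,n}$ in $\ell^{r,\gamma}_m$ by the product of $\|f\|_{M^{p,u}_{m_1\otimes\nu}}$ and $\|h\|_{M^{q,t}_{v_1\otimes v_2\nu^{-1}}}$, since $\|f\ast h\|_{M^{r,\gamma}_m}\asymp\|(V_{g_0}(f\ast h)(\alpha k,\beta n))_{k,n}\|_{\ell^{r,\gamma}_m}$.

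The starting point is the pointwise identity
\[
|V_{g_0}(f\ast h)(x,\omega)|\le\big(|V_{g_1}f(\cdot,\omega)|\ast|V_{g_2}h(\cdot,\omega)|\big)(x),
\]
the convolution being taken in the time variable $x$ with $\omega$ a free parameter; I would derive it from the fundamental identity of \tfa\ (relating $V_gf$ to $V_{\hat g}\hat f$) combined with the elementary formula $V_{\phi_1\phi_2}(FG)=V_{\phi_1}F\ast V_{\phi_2}G$ for the STFT of a product, applied to $\widehat{f\ast h}=\hat f\hat h$ with $\hat g_0=\hat g_1\hat g_2$. Evaluating at the lattice points $\omega=\beta n$ the \emph{same} frequency appears in both factors, so no frequency mixing occurs. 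Splitting the $x$-integral over the cubes $\alpha j+\alpha Q$ then dominates the continuous convolution at $\alpha k$ by a genuine sequence convolution in $k$:
\[
|V_{g_0}(f\ast h)(\alpha k,\beta n)|\lesssim\sum_{j\in\zd}A_{j,n}\,B_{k-j,n},
\]
where $A_{j,n}$ and $B_{l,n}$ are the local suprema of $|V_{g_1}f(\cdot,\beta n)|$ and $|V_{g_2}h(\cdot,\beta n)|$ over the corresponding cubes. The key point here is that these local-sup sequences are controlled by the input norms, $\|(A_{j,n})\|_{\ell^{p,u}_{m_1\otimes\nu}}\lesssim\|f\|_{M^{p,u}_{m_1\otimes\nu}}$ and $\|(B_{l,n})\|_{\ell^{q,t}_{v_1\otimes v_2\nu^{-1}}}\lesssim\|h\|_{M^{q,t}_{v_1\otimes v_2\nu^{-1}}}$, which is exactly the Wiener amalgam upgrade of Theorem \ref{G33} (in its mixed-norm form): it replaces the bare samples of \eqref{gaborframe2} by their local suprema at comparable cost.

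It remains to estimate the $\ell^{r,\gamma}_m$ norm of the sequence convolution, and this is where the weight bookkeeping and the two sequence inequalities enter. Since $m\in\mathcal{M}_v(\rdd)$, writing $(\alpha k,\beta n)=(\alpha k,0)+(0,\beta n)$ gives $m(\alpha k,\beta n)\le C\,m_1(\alpha k)\,v_2(\beta n)$; the $v_1$-moderateness $m_1(\alpha k)\le C\,m_1(\alpha j)v_1(\alpha(k-j))$ then distributes the time-weight across the convolution, while the factorization $v_2(\beta n)=\nu(\beta n)\cdot\big(v_2(\beta n)\nu(\beta n)^{-1}\big)$ splits the frequency-weight between the two factors. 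In the index $k$ I apply the weighted Young inequality for sequences (property (ii), \eqref{Yrgrande1}--\eqref{Yrminor1}), with $m_1$ the moderate and $v_1$ the submultiplicative weight, valid precisely under the hypotheses \eqref{Youngindicesrbig}; in the index $n$ I apply H\"older's inequality for sequences \eqref{ptwellp} under $1/u+1/t=1/\gamma$. Reassembling the mixed norm yields the claimed bound.

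The conceptual crux, and the reason the whole argument is routed through Gabor coefficients, is the treatment of the range $0<r<1$: for sequences one has $\|a\ast b\|_{\ell^r}\le\|a\|_{\ell^r}\|b\|_{\ell^r}$ (from $p=q=r$ in property (ii), a consequence of the $r$-subadditivity of $t\mapsto t^r$), whereas the continuous Young inequality $L^r\ast L^r\hookrightarrow L^r$ fails. The main technical obstacle is therefore not the Banach case $r\ge1$ (which already follows from the continuous pointwise estimate together with ordinary Young and H\"older), but the passage from the continuous time-convolution to the clean sequence convolution above, i.e.\ the local-sup domination, which must be carried out uniformly in the frequency sample $n$ and relies on the amalgam control of Theorem \ref{G33}.
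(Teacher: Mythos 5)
Your proof is correct and follows essentially the same route as the paper's: both rest on the identity $V_{g_1\ast g_2}(f\ast h)(\cdot,\omega)=e^{-2\pi i x\omega}\bigl((f\ast M_{\omega}g_1^{*})\ast(h\ast M_{\omega}g_2^{*})\bigr)$ (the paper takes $g_1=g_2$ Gaussian and derives it directly from $V_gf(x,\omega)=e^{-2\pi i x\omega}(f\ast M_\omega g^{*})(x)$ rather than via the Fourier side), followed by the Gabor-frame norm equivalence \eqref{gaborframe2}, the majorization $m(\alpha k,\beta n)\lesssim m_1(\alpha k)v_2(\beta n)$, weighted Young for sequences in $k$ and H\"older with the $\nu$-splitting in $n$. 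The only difference is that you make explicit the passage from the continuous time-convolution sampled on $\alpha\zd$ to a genuine sequence convolution via local suprema and the (mixed-norm) amalgam control of Theorem \ref{G33}, a step the paper leaves implicit.
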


\begin{proof}
	We use the key idea in \cite[Proposition 2.4]{EleCharly2003} to measure the \modsp\ norm with respect to the Gaussian windows
	$g_0(x) =e^{-\pi{x^2}}$ and
	$g(x)=2^{-d/2}e^{-\pi x^2/2} = (g_0\ast g_0)(x)\in\sch(\Ren)$.

	A straightforward computation shows $V_g f\phas =e^{-2\pi i x\o}(f\ast M_\o g^*)(x)$ (recall that $g^*(x)=\overline{g(-x)}$).  Thus, using the same argument as in the proof \cite[Proposition 2.4]{EleCharly2003}, we recall the easily verified  identity $M_\o(g_0^*\ast
	g_0^*)=M_\o g_0^*\ast  M_\o g_0^*$ and write the STFT of $f \ast
	h$ as follows:
	$$
	V_g(f\ast h)\phas =e^{-2\pi i x\o}\big( (f\ast h)\ast M_\o g^*\big)(x)
	=e^{-2\pi i x\o}\big( (f\ast M_\o g_0^*) \ast (h\ast M_\omega g_0^*)
	\big)(x) \, .$$
	In the sequel,  we first use the norm equivalence \eqref{gaborframe2}, 
 written in terms of the STFT as
	$
	\|f\|_{\Mmpq } \asymp \|(V_{g} f (\lambda))_{\lambda\in\Lambda}\|_{\ell^{p,q}_{{m}}(\Lambda)},
	$
	where $\Lambda=\a\zd\times\b\zd$. Then we majorize $m$ by
	$$m(\a k, \b n) \lesssim \linebreak[3] m(\a k,0)v(0,\b n ) = m_1(\a k) v_2(\b n),$$
	and finally   use  Young's convolution 
	inequality for sequences in the $k$-variable  and
	H\"older's one in the $n $-variable. The indices $p,q,r,\gamma,t,u$ fulfil the equalities in the assumptions.  We show in details the case when $r,\gamma,t,u<\infty$:
	\begin{align*}
		\|f\ast h\|_{M^{r,\gamma}_m}&\asymp  \|((V_{g}(f\ast h))(\a k,\b n)m(\a k, \b n))_{k,n}\|_{\ell^{r,\gamma}(\zdd)}\\
		%&=&\|(f\ast h)\ast M_\o g^*\|_{L^{r,s}_m}=\|(f\ast M_\o g_0^*)\ast(
		%h\ast M_\o g_0^*)\|_{L^{r,s}_m}\\
		&\lesssim \left( \sum_{n\in\zd} \left( \sum_ {k\in\zd} |(f\ast M_{\b n} g_0^*) \ast (h\ast
		M_{\b n} g_0^*) (\a k)|^r m_1(\a k) ^r \,\right) ^{\gamma/r} \, v_2(\b n) ^\gamma
	 \right)^{1/\gamma} \\
		&= \left( \sum_{n\in\zd} \| (f\ast M_{\b n} g_0^*) \ast (h\ast
		M_{\b n} g_0^*) \|^\gamma_{\ell^{r}_{ m_1}(\a\zd)} v_2(\b n)^\gamma  
		\right)^{1/\gamma}\\
		&\lesssim  \left(\sum_{n\in\zd}\|f\ast M_{\b n} g_0^*\|_{\ell^{p}_{m_1}(\a\zd)}^\gamma\,
		\|h\ast M_{\b n} g_0^*\|_{\ell^{q}_{v_1}(\a\zd)}^\gamma\, v_2(\b n)^\gamma\,\right)^{1/\gamma}\\
		%&= & \left(\int_{\Ren}\|f\ast M_\o g_0^*\|_{L^{p}_{m_1}}^s \nu(\o)^s
		%  \, \|h\ast M_\o g_0^*\|_{L^{q}_{v_1}}^s\frac{v
		%_2(\o)^s}{\nu(\o)^s}\,d\o\right)^{1/s}\\
		&\lesssim  \left(\sum_{n\in\zd} \!\!\|f\ast M_{\b n} g_0^*\|_{\ell^{p}_{m_1}(\a\zd)}^{u} \nu
		(\b n )^{u}   \right) ^{\frac{1}{u}} \!
		\left( \sum_{n\in\zd} \!\!\|h\ast M_{\b n} g_0^*\|_{\ell^{q}_{v_1}(\a\zd)}^{t}\frac{v
			_2(\b n)^{t}}{\nu(\b n)^{t}}\! \right) ^{\frac{1}{t}} \\
		%&\asympt & \|f\ast M_\o g_0^*\|_{L^{p,st}_{m_1\otimes \nu}}\|h\ast M_\o
		%g_0^*\|_{L^{q,st'}_{v_1\otimes v_2\nu^{-1}}}\\
		&= \|((V_{g_0}f)(\lambda))_{\lambda}\|_{\ell^{p,u}_{m_1\otimes\nu}(\Lambda)} \,\,		\|((V_{g_0}h)(\lambda))_{\lambda}\|_{\ell^{q,t}_{v_1\otimes v_2\nu \inv}(\Lambda)}\\
		&\asymp  \|f\|_{M^{p,u}_{m_1\otimes \nu }} \,\,
		\|h\|_{M^{q,t}_{v_1\otimes v_2 \nu \inv }}.
	\end{align*}
	The cases when one among the indexes $r,\gamma,t,u$ is equal to $\infty$ are done similarly. This concludes the proof.
\end{proof}

\subsection{Weyl Operators}
Every continuous operator from $\cS (\rd )$ to $\cS ' (\rd )$ can be
represented as a pseudodifferential operator in the  Weyl form $L_\sigma$, with Weyl symbol $\sigma\in\cS'(\rdd)$. The operator  is formally defined by
\begin{equation}\label{Weyl}
L_\sigma f (x)=\intrdd \sigma\left(\frac{x+y}2,\omega\right)e^{2\pi i (x-y)\,\omega}f(y)\,dy d\omega.
\end{equation}
The crucial relation between the action of the Weyl operator $L_\sigma$ on time-frequency shifts and the short-time Fourier transform of its symbol is contained in \cite[Lemma 3.1]{Grochenig_2006_Time}.
\begin{lemma}\label{lemma41} Consider  $g\in \cS(\rd)$, $\Phi=W(g,g)$. Then, for $\sigma\in \cS'(\rdd)$,
	\begin{equation}\label{311}
	|\la L_\sigma \pi(z)g,\pi(w) g\ra|=\left|V_\Phi \sigma\left(\frac{z+w}2,j(w-z)\right)\right|, \quad z,w\in\rdd,
	\end{equation}
	where $j(z_1,z_2)=(z_2,-z_1)$.
\end{lemma}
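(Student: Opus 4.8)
The plan is to reduce the matrix coefficient $\la L_\sigma\pi(z)g,\pi(w)g\ra$ to a short-time Fourier transform of the symbol via the fundamental intertwining between the Weyl calculus and the cross-Wigner distribution, and then to exploit the covariance of the Wigner distribution under time-frequency shifts.

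First I would recall the standard identity linking the Weyl operator to the cross-Wigner distribution, namely
\[
\la L_\sigma f,h\ra=\la\sigma,W(h,f)\ra,\qquad f,h\in\cS(\rd),
\]
which follows directly from \eqref{Weyl} and \eqref{CWD}: substituting $x=u+t/2$, $y=u-t/2$ in the integral defining $L_\sigma f$ and recognizing the resulting partial Fourier integral as $\overline{W(h,f)}$ gives the claim, and it extends to $\sigma\in\cS'(\rdd)$ by duality. Applying it with $f=\pi(z)g$ and $h=\pi(w)g$ yields
\[
\la L_\sigma\pi(z)g,\pi(w)g\ra=\la\sigma,W(\pi(w)g,\pi(z)g)\ra .
\]

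The heart of the argument is then to compute $W(\pi(w)g,\pi(z)g)$ explicitly. Writing out \eqref{CWD} with $(\pi(w)g)(x+t/2)=e^{2\pi i(x+t/2)w_2}g(x+t/2-w_1)$ and the conjugate factor for $\pi(z)g$, I would perform the substitution $t\mapsto t+(w_1-z_1)$ together with the spatial recentering $u=x-\tfrac{w_1+z_1}{2}$. This simultaneously symmetrizes the two copies of $g$ into $g(u+t/2)\overline{g(u-t/2)}$ and turns the integral into $\Phi\!\left(u,\omega-\tfrac{w_2+z_2}{2}\right)$, where $\Phi=W(g,g)$. Collecting the leftover exponentials and isolating their dependence on $(x,\omega)$, the surviving phase is a pure modulation in the frequency direction $(w_2-z_2,-(w_1-z_1))=j(w-z)$, multiplied by a unimodular constant independent of $(x,\omega)$. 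In other words, one obtains the covariance relation
\[
W(\pi(w)g,\pi(z)g)=c\,\pi\!\left(\tfrac{z+w}{2},\,j(w-z)\right)\Phi,\qquad |c|=1 .
\]

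Finally, since $\la\sigma,\pi(X,\Xi)\Phi\ra=V_\Phi\sigma(X,\Xi)$ by the definition of the STFT (cf.\ \eqref{STFTdef}, now on $\rdd$ with window $\Phi$), combining the two displays gives $\la L_\sigma\pi(z)g,\pi(w)g\ra=\bar c\,V_\Phi\sigma\!\left(\tfrac{z+w}{2},j(w-z)\right)$, and taking absolute values removes the unimodular factor $c$, yielding \eqref{311}. The only delicate point is the phase bookkeeping in the covariance computation; because the statement concerns only the modulus, the exact value of $c$ is irrelevant, so the main effort is to track which exponential carries the $(x,\omega)$-dependence and to verify that its frequency is precisely $j(w-z)$. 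The elementary identity $j(w_1-z_1,w_2-z_2)=(w_2-z_2,-(w_1-z_1))$ makes this identification transparent.
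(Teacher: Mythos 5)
Your argument is correct and is essentially the standard proof of this identity: the paper itself does not prove the lemma but cites \cite[Lemma 3.1]{Grochenig_2006_Time}, where exactly this route is taken --- the duality $\la L_\sigma f,h\ra=\la\sigma,W(h,f)\ra$ combined with the covariance $W(\pi(w)g,\pi(z)g)=c\,\pi\bigl(\tfrac{z+w}{2},j(w-z)\bigr)\Phi$, $|c|=1$, and the definition of the STFT. Your computations (the symmetrizing substitution, the identification of the residual phase with a modulation in direction $j(w-z)$, and the observation that the unimodular constant is irrelevant after taking absolute values) all check out.
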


We first recall Schatten class results for the Weyl calculus in terms of modulation spaces, initially proved for $1\leq p\leq \infty$ in \cite[Theorem 4.5]{BCG02}, for $0<p<1$ we refer to  \cite[Theorem 3.4]{ToftquasiBanach2017}. 
\begin{theorem}\label{Weylcomp}
	 If the Weyl symbol $\sigma\in M^{p,1}(\rdd)$  for some $0< p<\infty$, then the operator $L_\sigma$ belongs to the Schatten class $S_p$  with
	 $$ \|L_\sigma \|_{S_p}\leq \|\sigma\|_{M^{p,1}}.$$
	 In particular, $L_\sigma$ is a compact operator on $\lrd$.
\end{theorem}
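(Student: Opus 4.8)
The plan is to reduce the Schatten estimate to a statement about the infinite matrix of $L_\sigma$ in a Gabor frame, and then to read off the singular values from Lemma \ref{lemma41}, which identifies the matrix entries with samples of $V_\Phi\sigma$. Concretely, I would fix $g\in\cS(\rd)$ with $\|g\|_2=1$ and a lattice $\Lambda=\alpha\zd\times\beta\zd$ (with $\alpha\beta$ small enough) so that $\G(g,\Lambda)$ is a Parseval Gabor frame, and set $\Phi=W(g,g)\in\cS(\rdd)$. By density of $\cS(\rdd)$ in $M^{p,1}(\rdd)$ it suffices to treat $\sigma\in\cS(\rdd)$ and pass to the limit. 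Writing $C\colon\lrd\to\ell^2(\Lambda)$, $Cf=(\la f,\pi(\lambda)g\ra)_\lambda$, the Parseval property makes $C$ an isometry with $C^\ast C=I$, so $L_\sigma=C^\ast M C$ with $M=CL_\sigma C^\ast$ the matrix $M_{\mu\lambda}=\la L_\sigma\pi(\lambda)g,\pi(\mu)g\ra$. Since $S_p$ is an operator ideal (for every $0<p\le\infty$), $\|L_\sigma\|_{S_p}\le\|C^\ast\|_{\mathrm{op}}\|M\|_{S_p}\|C\|_{\mathrm{op}}=\|M\|_{S_p}$, and everything reduces to estimating $\|M\|_{S_p}$ on $\ell^2(\Lambda)$. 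By Lemma \ref{lemma41}, $|M_{\mu\lambda}|=\big|V_\Phi\sigma(\tfrac{\mu+\lambda}2,j(\mu-\lambda))\big|$.

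Next I would decompose $M$ into its generalized diagonals $M=\sum_{\nu\in\Lambda}M_\nu$, where $M_\nu$ retains only the entries with $\mu-\lambda=\nu$. Each $M_\nu$ is a weighted shift, $M_\nu e_\lambda=M_{\lambda+\nu,\lambda}\,e_{\lambda+\nu}$, so $M_\nu^\ast M_\nu$ is diagonal and the singular values of $M_\nu$ are exactly the moduli of its entries; hence
$$\|M_\nu\|_{S_p}^p=\sum_{\lambda\in\Lambda}|M_{\lambda+\nu,\lambda}|^p=\sum_{\lambda\in\Lambda}\Big|V_\Phi\sigma\big(\lambda+\tfrac\nu2,\,j(\nu)\big)\Big|^p.$$
For fixed $\nu$ this is the $p$-th power of the $\ell^p$-norm of $V_\Phi\sigma$ sampled in the space variable over $\Lambda+\tfrac\nu2$ at the single frequency $j(\nu)$. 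The structure of the symbol space matches this exactly: by the Gabor norm equivalence \eqref{gaborframe2} for $M^{p,1}(\rdd)$ one has $\|\sigma\|_{M^{p,1}}\asymp\sum_{\nu}\|V_\Phi\sigma(\cdot,j(\nu))\|_{\ell^p(\mathrm{space})}$, i.e. an $\ell^1$ in the frequency $j(\nu)$ of the $\ell^p$-over-space norms. Thus for $1\le p<\infty$ the ordinary triangle inequality in $S_p$ gives $\|M\|_{S_p}\le\sum_\nu\|M_\nu\|_{S_p}\asymp\|\sigma\|_{M^{p,1}}$, which is precisely the claimed bound; the half-shift $\tfrac\nu2$ and the passage to $j(\Lambda)$ only affect constants and sampling density and are absorbed by standard Gabor theory.

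The hard part is the quasi-Banach range $0<p<1$. There the triangle inequality in $S_p$ is unavailable; only the $p$-triangle inequality $\|M\|_{S_p}^p\le\sum_\nu\|M_\nu\|_{S_p}^p$ holds, and summing the diagonals in this way forces $\ell^p$ rather than $\ell^1$ behaviour in the frequency variable, yielding only the weaker estimate $\|L_\sigma\|_{S_p}\lesssim\|\sigma\|_{M^{p,p}}=\|\sigma\|_{M^p}$ (recall $M^{p,p}\hookrightarrow M^{p,1}$ for $p<1$, so this corresponds to a strictly smaller symbol class). Upgrading the frequency summation back from $\ell^p$ to $\ell^1$, while preserving the sharp $\ell^p$ gain at each fixed frequency, is the delicate point and cannot be obtained from the diagonal decomposition alone; it is exactly here that one invokes the refined quasi-Banach Gabor estimates, which is why we quote \cite[Theorem 3.4]{ToftquasiBanach2017} for $0<p<1$ and \cite[Theorem 4.5]{BCG02} for $1\le p\le\infty$. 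The final compactness assertion is then immediate, since $S_p$ consists of compact operators for every $p<\infty$.
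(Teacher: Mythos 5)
Your proposal is sound where it argues and candid where it defers, but you should know that the paper itself contains \emph{no} proof of Theorem \ref{Weylcomp}: it is stated as a recalled result, with \cite[Theorem 4.5]{BCG02} cited for $1\le p\le\infty$ and \cite[Theorem 3.4]{ToftquasiBanach2017} for $0<p<1$. So for $1\le p<\infty$ you have actually done more than the paper: the reduction $L_\sigma=C^\ast MC$ through a Parseval Gabor frame, the decomposition of the Gabor matrix $M$ into generalized diagonals $M_\nu$ (weighted shifts, whose singular values are exactly the moduli of their entries via Lemma \ref{lemma41}), and the identification of $\sum_\nu\|M_\nu\|_{S_p}$ with a sampled $\ell^1(\ell^p)$ norm of $V_\Phi\sigma$ controlled by $\|\sigma\|_{M^{p,1}}$ (via the amalgam majorization of Theorem \ref{G33}, which handles the half-shifted sampling set) is a correct, essentially self-contained proof in the Banach range, and it is in substance the almost-diagonalization argument underlying the cited results. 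Your diagnosis of the quasi-Banach range is also the right one: the $p$-triangle inequality in $S_p$ degrades the frequency summation from $\ell^1$ to $\ell^p$, so the diagonal decomposition by itself only yields $\|L_\sigma\|_{S_p}\lesssim\|\sigma\|_{M^{p,\min(1,p)}}$, and upgrading this to the stated $M^{p,1}$ hypothesis for $p<1$ genuinely requires the refined results of \cite{ToftquasiBanach2017} --- precisely the reference the paper leans on for that range, so your citation there is no weaker than the paper's own treatment. It is worth noting that what your argument does prove for all $0<p<\infty$, namely $\sigma\in M^{p,\min(1,p)}(\rdd)\Rightarrow L_\sigma\in S_p$, already suffices for every downstream application in the paper (Propositions \ref{eigenfunctWeyl} and \ref{eigenfunctWeyl2} only ever invoke the theorem for symbols lying in $M^{p,\gamma}$ for \emph{every} $\gamma>0$), so nothing in the paper's main results depends on the part of the statement you could not reproduce.
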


\begin{theorem}\label{Charpseudo}
(i)	Assume   $p,q,\gamma\in (0,\infty]$  such that \begin{equation}\label{indicitutti}
	\frac1{p} + \frac{1}{q}=\frac1{\gamma}.
	\end{equation}
If $\sigma \in  M^{p,\min\{1,\gamma\}}(\rdd)$, then the
pseudodifferential operator $L_\sigma$, from $\cS(\rd)$ to $\cS'(\rd)$,
extends uniquely to a bounded operator from ${M}^{q}(\R^d)$ to
${M}^{\gamma}(\R^d)$.\\
(ii) If $s,r\geq 0$, $t\geq r +s$, and the symbol $\sigma \in M^{\infty,1}_{v_s\otimes v_t}(\R^{2d})$, then the
pseudodifferential operator $L_\sigma$, from $\cS(\rd)$ to $\cS'(\rd)$,  extends uniquely to a bounded operator from 
$M^2_{v_r}(\rd)$ into $M^2_{v_{r+s}}(\rd)$.
\end{theorem}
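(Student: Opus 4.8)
The plan is to discretize $L_\sigma$ by a Gabor frame and to reduce both statements to estimates on the matrix of $L_\sigma$, whose entries are controlled by the STFT of the symbol via Lemma \ref{lemma41}. First I fix a non-zero $g\in\cS(\rd)$ with $\|g\|_2=1$ and a lattice $\Lambda=\alpha\zd\times\beta\zd$ chosen fine enough that $\G(g,\Lambda)$ is a Parseval frame for $\lrd$ and, simultaneously, that the sampling of $V_\Phi\sigma$ (with $\Phi=W(g,g)\in\cS(\rdd)$) on the induced lattice in $\rdd\times\rdd$ realizes the norm equivalence \eqref{gaborframe2} for $M^{p,q}(\rdd)$. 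Since $\cS(\rd)$ is dense in $M^q(\rd)$ for $q<\infty$ (and one argues by weak-$*$ density otherwise), it suffices to prove the norm bounds on a dense subspace, which also gives uniqueness of the extension. Writing $b_\mu=\langle f,\pi(\mu)g\rangle$ and using \eqref{RF}, I get $\langle L_\sigma f,\pi(\lambda)g\rangle=\sum_{\mu\in\Lambda}M_{\lambda\mu}\,b_\mu$ with $M_{\lambda\mu}=\langle L_\sigma\pi(\mu)g,\pi(\lambda)g\rangle$, and by Lemma \ref{lemma41}, $|M_{\lambda\mu}|=|V_\Phi\sigma(\tfrac{\lambda+\mu}2,j(\lambda-\mu))|$.

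For part (i) I pass to the difference variable $k=\lambda-\mu\in\Lambda$, so that $|\langle L_\sigma f,\pi(\lambda)g\rangle|\leq\sum_{k}|V_\Phi\sigma(\lambda-\tfrac{k}2,j(k))|\,|b_{\lambda-k}|$, and I estimate the $\ell^\gamma(\Lambda)$-norm of the left-hand side through the frame equivalence $\|L_\sigma f\|_{M^\gamma}\asymp\|(\langle L_\sigma f,\pi(\lambda)g\rangle)_\lambda\|_{\ell^\gamma}$. When $\gamma\geq1$ I apply Minkowski's inequality in $\lambda$ followed by H\"older's inequality with $1/p+1/q=1/\gamma$; when $0<\gamma<1$ I instead use the $\gamma$-subadditivity $(\sum_k x_k)^\gamma\leq\sum_k x_k^\gamma$ together with H\"older for the conjugate pair $p/\gamma,\,q/\gamma$. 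In either case the factor carrying $f$ collapses to $\|(b_\mu)_\mu\|_{\ell^q}\asymp\|f\|_{M^q}$, while the factor carrying $\sigma$ is exactly the discrete mixed norm in $\ell^{p,\min\{1,\gamma\}}$ of the samples of $V_\Phi\sigma$, comparable by \eqref{gaborframe2} to $\|\sigma\|_{M^{p,\min\{1,\gamma\}}}$. This is where $\min\{1,\gamma\}$ enters: the first exponent matches the $\ell^p$-summation over the midpoint, the second the outer $\ell^1$ (for $\gamma\geq1$) or $\ell^\gamma$ (for $\gamma<1$) summation over the difference.

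For part (ii) I run the same discretization with weights, using $\|L_\sigma f\|_{M^2_{v_{r+s}}}\asymp\|(\langle L_\sigma f,\pi(\lambda)g\rangle\,v_{r+s}(\lambda))_\lambda\|_{\ell^2}$ and $\|f\|_{M^2_{v_r}}\asymp\|(b_\mu v_r(\mu))_\mu\|_{\ell^2}$, so that everything reduces to the $\ell^2$-boundedness of the kernel $K_{\lambda\mu}=|M_{\lambda\mu}|\,v_{r+s}(\lambda)/v_r(\mu)$. The crux is the elementary weight inequality
\begin{equation*}
\frac{v_{r+s}(\lambda)}{v_r(\mu)}\lesssim v_s\Big(\tfrac{\lambda+\mu}2\Big)\,v_t(\lambda-\mu),
\end{equation*}
which follows from $\langle\lambda\rangle\lesssim\langle\mu\rangle\langle\lambda-\mu\rangle$ and $\langle\lambda\rangle\lesssim\langle\tfrac{\lambda+\mu}2\rangle\langle\lambda-\mu\rangle$ combined with the hypothesis $t\geq r+s$; here $v_t(\lambda-\mu)=v_t(j(\lambda-\mu))$ since $j$ is a rotation. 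Together with Lemma \ref{lemma41} this gives $K_{\lambda\mu}\lesssim|V_\Phi\sigma(X,\Xi)|\,v_s(X)\,v_t(\Xi)$ at $X=\tfrac{\lambda+\mu}2$, $\Xi=j(\lambda-\mu)$. A Schur test then closes the argument: summing over $\mu$ (resp.\ $\lambda$) and substituting $k=\lambda-\mu$, both row and column sums are dominated, uniformly in $\lambda$ (resp.\ $\mu$), by the discrete $\ell^{\infty,1}$-norm of the weighted samples of $V_\Phi\sigma$, hence by $\|\sigma\|_{M^{\infty,1}_{v_s\otimes v_t}}$.

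I expect the main obstacle to be the passage between the continuous modulation-space norms and the discrete lattice sums in the quasi-Banach regime. For $0<\gamma<1$ the naive continuous convolution estimate fails, since the triangle inequality for $L^\gamma$ is unavailable; this is precisely why the Gabor-frame discretization, where $\gamma$-subadditivity of sums is at hand, is essential, mirroring the mechanism behind Proposition \ref{mconvmp}. A second delicate point is controlling the pointwise samples of $V_\Phi\sigma$ over the (non-product) induced lattice---in particular the inner supremum over the midpoint variable appearing in the $M^{\infty,1}$-norm of part (ii)---by the modulation-space norm; this is handled by the Wiener amalgam embedding of Theorem \ref{G33} (i.e.\ $V_\Phi\sigma\in W(L^\infty,L^p_m)$), which guarantees that local suprema are summable against the frequency weight, a shear of the sampling lattice affecting the constants only.
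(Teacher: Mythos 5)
Your argument is correct in outline, but it is not the paper's route. For part (i) the paper gives no direct proof at all: it simply invokes Toft's continuity theorems (\cite[Theorem 4.3]{toft1} for $\gamma\geq 1$ and \cite[Theorem 3.1]{ToftquasiBanach2017} for $\gamma<1$), so your Gabor-matrix discretization --- difference variable, Minkowski/H\"older for $\gamma\geq1$ versus $\gamma$-subadditivity plus H\"older with exponents $p/\gamma,q/\gamma$ for $\gamma<1$ --- is a genuinely self-contained alternative, and it correctly explains where the exponent $\min\{1,\gamma\}$ comes from. For part (ii) the paper runs the \emph{continuous} version of exactly your argument: inversion formula, Lemma \ref{lemma41}, and Schur's test for the integral operator with kernel $\bigl|V_\Phi\sigma\bigl(\tfrac{z+w}{2},j(w-z)\bigr)\bigr|\langle z\rangle^{-r}\langle w\rangle^{r+s}$, with the same weight inequality \eqref{estF1} proved by the same two-case analysis. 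The trade-off is clear: the continuous Schur test needs only a change of variables and the rotation-invariance of $v_t\circ j$ to identify the row/column integrals with the $M^{\infty,1}_{v_s\otimes v_t}$-norm, whereas your discrete version additionally requires controlling samples of $V_\Phi\sigma$ on the sheared lattice $\{(\lambda-\tfrac{k}{2},j(k))\}$ by the modulation norm; you correctly identify the amalgam embedding as the tool for this, but note that Theorem \ref{G33} is stated in the paper only for $M^{p,p}_m$, so you would need its mixed-norm analogue $V_\Phi\sigma\in W(L^\infty,L^{p,q}_m)$ for $\sigma\in M^{p,q}_m$ (standard, but not literally quoted here). What your approach buys is uniformity: one discretization handles both parts, including the quasi-Banach range where, as you observe, the continuous convolution estimates are unavailable --- the same mechanism the paper exploits in Proposition \ref{mconvmp}.
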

\begin{proof} 
$(i)$ Assume  $\gamma\geq 1$, then, by 	\eqref{indicitutti}, $p\geq \gamma\geq 1$ and $q\geq \gamma\geq 1$. The claim was  proved by Toft in \cite[Theorem 4.3]{toft1}.
The case $\gamma<1$, $p,q\in (0,\infty]$ was again proved by Toft in \cite[Theorem 3.1]{ToftquasiBanach2017}.

$(ii)$ Let $g \in \cS(\rd)$ with $\| g \|_{L^2}=1$.
From the inversion formula \eqref{invformula},
\[
V_g (L_\sigma f)(w)=\int_{\R^{2d}}\langle L_\sigma \pi(z) g,
\pi(w)g\rangle \, V_g f(z) \, dz.
\]
The desired result thus follows if we can prove that the map $M(\sigma)$
defined by
\[
M(\sigma) G(w)=\int_{\R^{2d}}\langle  L_\sigma \pi(z) g,
\pi(w) g \rangle \, G(z) \, dz
\]
is continuous from $L^{2}_{v_r}(\rdd)$ into $L^{2}_{v_{r+s}}(\rdd)$. Using \eqref{311}, we see that it is sufficient to prove that the integral operator with integral kernel 
\[
 \left|V_\Phi \sigma\left(\frac{z+w}2,j(w-z)\right)\right|\langle z\rangle^{-r} \langle w\rangle^{r+s}
\]
is bounded on $L^2(\rdd)$. This follows from Schur's test (see, e.g., \cite[Lemma 6.2.1 (b)]{Grochenig_2001_Foundations}). Indeed, by assumption $\sigma\in M^{\infty,1}_{v_s\otimes v_t}$, so that 
\[
\sup_{w\in\rdd} \intrdd  \left|V_\Phi \sigma\left(\frac{z+w}2,j(w-z)\right)\right| \la z+w \ra^{s}  \la w- z\ra^{t} dz<\infty
 \]
 and similarly 
 \[
 \sup_{z\in\rdd} \intrdd  \left|V_\Phi \sigma\left(\frac{z+w}2,j(w-z)\right)\right| \la z+w \ra^{s}  \la w- z\ra^{t} dw<\infty.
 \]
Hence it is sufficient to prove that for some positive constant $C>0$ we have
\begin{equation}\label{estF1}
\la z+w\ra^{-s}\la w-z\ra ^{-t} \la z \ra^{-r} \la w\ra^{r+s}\leq C,\quad \forall w,z\in\rdd.
\end{equation}
Let us prove the estimate \eqref{estF1}. Setting $x=z+w$, $y=w-z$, the inequality \eqref{estF1} can be rephrased as
\begin{equation}\label{estF2}
\la x\ra^{-s}\la y\ra ^{-t} \la x-y\ra^{-r} \la x+y\ra^{r+s}\leq C,\quad \forall x,y\in\rdd.
\end{equation}
For $|x|< 2|y|$, observe that $|x+y|< 3|y|$ and since $t\geq r+ s$ we get the estimate \eqref{estF2}. For $|x|\geq 2|y|$, we use $\la x+y\ra \asymp \la x-y\ra \asymp \la x \ra$ and  \eqref{estF2} immediately follows.
\end{proof}

 We remark that \begin{eqnarray*}
 	M^2_{\vs}(\Ren)= \Qs(\Ren),
 \end{eqnarray*}
  the Shubin-Sobolev spaces, cf. \cite{BCG02,Shubin91}. In particular, for $s=0$, $M^2(\rd)=L^2(\rd)$.  Thus, we recover the known result, c.f. \cite[Theorem 4.3]{Toftweight2004}:
 \begin{corollary}
 If $s,r\geq 0$, $t\geq r +s$, and the symbol $\sigma \in M^{\infty,1}_{v_s\otimes v_t}(\R^{2d})$, then the
 pseudodifferential operator $L_\sigma$, from $\cS(\rd)$ to $\cS'(\rd)$,  extends uniquely to a bounded operator from 
 $Q_r(\Ren)$ into ${Q}_{{r+s}}(\rd)$.
 \end{corollary}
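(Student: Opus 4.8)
The plan is to obtain this statement as an immediate consequence of part (ii) of Theorem \ref{Charpseudo}, reformulated in Shubin--Sobolev notation by means of the identification $M^2_{v_s}(\rd)=Q_s(\rd)$.

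First I would record the coincidence $M^2_{v_s}(\rd)=Q_s(\rd)$, valid for every $s\in\bR$ with equivalence of norms, as recalled just above the statement and proved in \cite{BCG02,Shubin91}. Applying it with $s$ replaced by $r$ and by $r+s$ yields the two identifications $M^2_{v_r}(\rd)=Q_r(\rd)$ and $M^2_{v_{r+s}}(\rd)=Q_{r+s}(\rd)$, again with equivalent norms.

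Next I would invoke Theorem \ref{Charpseudo} (ii): under the present hypotheses $s,r\geq 0$, $t\geq r+s$ and $\sigma\in M^{\infty,1}_{v_s\otimes v_t}(\rdd)$, it guarantees that $L_\sigma$ extends uniquely to a bounded operator from $M^2_{v_r}(\rd)$ into $M^2_{v_{r+s}}(\rd)$. Inserting the two norm identities just recalled turns this boundedness precisely into the continuity of $L_\sigma$ from $Q_r(\rd)$ into $Q_{r+s}(\rd)$, which is exactly the assertion.

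No genuine obstacle arises here: the corollary is a verbatim transcription of Theorem \ref{Charpseudo} (ii) into the Shubin--Sobolev scale, so the only point deserving a word is the uniqueness of the extension. This is inherited directly from that theorem, resting on the density of $\cS(\rd)$ in $M^2_{v_r}(\rd)=Q_r(\rd)$, which holds since the latter is a modulation space with finite Lebesgue exponents.
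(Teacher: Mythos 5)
Your proof is correct and matches the paper's (implicit) argument exactly: the corollary is obtained by combining Theorem \ref{Charpseudo}~(ii) with the identification $M^2_{v_s}(\rd)=Q_s(\rd)$ recalled immediately before the statement. Nothing further is needed.
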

 
An application of the previous theorem concerns the study of eigenfunctions' properties for Weyl operators. Observe that a Weyl operator having symbol $\sigma \in M^{p,\gamma}(\rdd)$ with $p<\infty$ is a compact operator on $\lrd$ by Theorem \ref{Weylcomp}.
\begin{proposition}\label{eigenfunctWeyl}
Consider a Weyl symbol $\sigma\in M^{p,\gamma}$ for some $0< p<\infty$ and every $\gamma>0$.  If $\lambda\in \sigma_P(L_\sigma)\setminus\{0\}$, then any eigenfunction $f\in L^2(\rd)$ with eigenvalue $\lambda$  is in $\cap_{\gamma>0}M^\gamma(\rd)$.
\end{proposition}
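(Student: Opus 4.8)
The plan is to run a bootstrapping (self-improvement) argument on the eigenfunction, using the continuity result of Theorem \ref{Charpseudo}(i) to gain a little decay at each step, starting from the observation that $f\in L^2(\rd)=M^2(\rd)$.

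First I would rewrite the eigenvalue equation as $f=\lambda^{-1}L_\sigma f$, which is legitimate since $\lambda\neq 0$. The basic implication is the following: if $f\in M^{q}(\rd)$ and we define $\gamma$ by $\frac1\gamma=\frac1p+\frac1q$, then because $\sigma\in M^{p,\beta}(\rdd)$ for \emph{every} $\beta>0$---in particular for $\beta=\min\{1,\gamma\}$---Theorem \ref{Charpseudo}(i) yields that $L_\sigma\colon M^q(\rd)\to M^\gamma(\rd)$ is bounded, whence $f=\lambda^{-1}L_\sigma f\in M^\gamma(\rd)$. Note that $\frac1\gamma>\frac1q$, so each application strictly lowers the exponent.

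Next I would iterate this implication. Set $q_0=2$ and $\frac{1}{q_{n+1}}=\frac1p+\frac1{q_n}$, so that $\frac1{q_n}=\frac12+\frac{n}{p}$ and $q_n\in(0,2]$ is well defined and positive for every $n$. By induction on $n$, applying the implication above at each stage (the required membership $\sigma\in M^{p,\min\{1,q_{n+1}\}}(\rdd)$ being precisely what the hypothesis grants), one obtains $f\in M^{q_n}(\rd)$ for all $n\in\bN$. Since $q_n\to 0$ as $n\to\infty$, for any prescribed $\gamma>0$ there is $n$ with $q_n\le\gamma$, and the inclusion relation of Theorem \ref{inclusionG} gives $M^{q_n}(\rd)\hookrightarrow M^\gamma(\rd)$; hence $f\in M^\gamma(\rd)$. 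As $\gamma>0$ was arbitrary, $f\in\bigcap_{\gamma>0}M^\gamma(\rd)$, as claimed.

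There is no deep obstacle here: the argument is a clean induction, and the real care lies in the bookkeeping---verifying at each stage that the H\"older-type relation $\frac1{q_{n+1}}=\frac1p+\frac1{q_n}$ holds, that the first index of the symbol stays fixed equal to $p$ while only the second index $\min\{1,\gamma\}$ varies (which is exactly why the hypothesis demands $\sigma\in M^{p,\gamma}$ for \emph{all} $\gamma>0$ rather than for a single $\gamma$), and that the $L^2$ identity $L_\sigma f=\lambda f$ is compatible with the $M^q\to M^\gamma$ extension of $L_\sigma$. The last point is immediate, since $L_\sigma$ is the unique continuous extension of the operator $\cS(\rd)\to\cS'(\rd)$ and $L^2(\rd)\hookrightarrow\cS'(\rd)$, so all the extensions agree on the common dense domain.
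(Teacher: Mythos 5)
Your proposal is correct and follows essentially the same route as the paper: both arguments bootstrap the eigenvalue equation $f=\lambda^{-1}L_\sigma f$ through Theorem \ref{Charpseudo}(i), producing a strictly decreasing sequence of exponents tending to $0$. Your write-up is in fact slightly more careful than the paper's, making explicit the formula $\tfrac1{q_n}=\tfrac12+\tfrac{n}{p}$, the use of the inclusion relations to pass to arbitrary $\gamma>0$, and the consistency of the various extensions of $L_\sigma$.
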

\begin{proof}
	By Theorem \ref{Charpseudo}, if the symbol $\sigma$ is in $M^{p,\gamma}(\rdd)$, for every $\gamma>0$, then the Weyl operator acts continuously from $ M^2(\rd)$ into $M^{\gamma_1}(\rd)$, with  $1/p+1/2=1/{\gamma_1}$ and, since $p<\infty$, we have $\gamma_1<2$.
	Thus, for $f\in M^2(\rd)$ eigenfunction with eigenvalue $\lambda\not=0$, we have $f=\frac1\lambda L_\sigma f\in M^{\gamma_1}(\rd)$. Starting with $f\in M^{\gamma_1}(\rd)$, we repeat the same argument, obtaining that the eigenfunction $f$ is in the smaller modulation space $M^{\gamma_2}(\rd)$, with 
	$$\frac{1}{p}+\frac{1}{\gamma_1}= \frac{1}{\gamma_2}
	$$
(observe $\gamma_2<\gamma_1$ since $p<\infty$). Continuing this way we construct a decreasing sequence of indices $\gamma_n>0$ and such that $\lim_{n\to\infty} \gamma_n=0$.
This proves the claim.
\end{proof}
Recall that, for $s,t>0$, by inclusion relations for modulation spaces we have $ M^{\infty,1}_{v_s\otimes v_t}(\rdd)\subset M^{p,1}(\rdd)$, for any $p>2d/s$, \cite[Theorem 1.5 and Lemma 4.10]{Guo2019}. Hence the operators below are compact operators by Theorem \ref{Weylcomp}.
\begin{proposition}\label{eigenfunctWeyl2}
	Consider   a Weyl symbol $\sigma\in M^{\infty,1}_{v_s\otimes v_t}(\rdd)$ for some $s>0$ and every $t>0$. If $\lambda\in \sigma_P(L_\sigma)\setminus\{0\}$, then any eigenfunction $f\in L^2(\rd)$ with eigenvalue $\lambda$ is in $\cS(\rd)$.
\end{proposition}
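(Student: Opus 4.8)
The plan is to run a bootstrapping argument on the scale of the weighted modulation spaces $M^2_{v_r}(\rd)=Q_r(\rd)$, $r\geq 0$, exploiting the self-improving identity $f=\lambda^{-1}L_\sigma f$. The engine is Theorem \ref{Charpseudo}(ii): since by hypothesis $\sigma\in M^{\infty,1}_{v_s\otimes v_t}(\rdd)$ for the fixed $s>0$ and \emph{every} $t>0$, for each $r\geq 0$ one may choose $t\geq r+s$ and conclude that $L_\sigma$ maps $M^2_{v_r}(\rd)$ boundedly into $M^2_{v_{r+s}}(\rd)$.

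First I would start from $f\in L^2(\rd)=M^2(\rd)=M^2_{v_0}(\rd)$, i.e.\ $r=0$. Choosing $t\geq s$ in the hypothesis, Theorem \ref{Charpseudo}(ii) gives $L_\sigma\colon M^2_{v_0}\to M^2_{v_s}$, whence $f=\lambda^{-1}L_\sigma f\in M^2_{v_s}(\rd)$. Feeding this back in with $r=s$ (and $t\geq 2s$) yields $f\in M^2_{v_{2s}}(\rd)$, and iterating $n$ times produces $f\in M^2_{v_{ns}}(\rd)$ for every $n\in\bN$. Since $s>0$, the indices $ns$ are unbounded, and because the spaces $M^2_{v_r}$ are nested decreasingly in $r$, this means $f\in\bigcap_{r\geq 0}M^2_{v_r}(\rd)$.

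Finally I would invoke the identification $\bigcap_{r\geq 0}M^2_{v_r}(\rd)=\bigcap_{r\geq 0}Q_r(\rd)=\cS(\rd)$, the well-known characterization of the Schwartz class as the intersection of all Shubin--Sobolev spaces (cf.\ \cite{BCG02,Shubin91}), which closes the proof. The argument is essentially routine once Theorem \ref{Charpseudo}(ii) is in hand; the one point requiring care is that the hypothesis $\sigma\in M^{\infty,1}_{v_s\otimes v_t}$ for \emph{all} $t>0$ is precisely what is needed to satisfy the constraint $t\geq r+s$ at every stage of the induction, so that each step gains a fixed amount $s$ of smoothness and decay without ever exhausting the symbol's regularity.
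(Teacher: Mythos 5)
Your argument is correct and coincides with the paper's own proof: both bootstrap the identity $f=\lambda^{-1}L_\sigma f$ through Theorem \ref{Charpseudo}(ii), gaining $s$ units on the Shubin--Sobolev scale at each step (using that $t$ can be taken arbitrarily large to satisfy $t\geq r+s$), and conclude via $\bigcap_{r>0}Q_r(\rd)=\cS(\rd)$. No changes needed.
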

\begin{proof}
	By Theorem \ref{Charpseudo}, if the symbol $\sigma$ is in $ M^{\infty,1}_{v_s\otimes v_t}(\rdd)$, for some $s>0$ and every $t>0$, then the Weyl operator acts continuously from $ L^2(\rd)$ into $M^{2}_{v_{s}}(\rd)=Q_{s}(\rd)$.
	Starting now with the eigenfunction $f$  in $Q_{s}(\rd)$ and repeating the same argument with $t\geq s$ we obtain that the eigenfunction is in $Q_{2s}(\rd)$. Proceeding this way we infer that $f\in \cap_{n\in\N_+} Q_{ns}(\rd)$.
	The inclusion relations for Shubin-Sobolev spaces and the property (see e.g., \cite{EleCharly2003,feichtinger-grochenig1997,Toftweight2004}) $$\cap_{r>0}Q_r(\rd)=\cS(\rd),$$
	 prove the claim.
\end{proof}

	\subsection{Localization Operators}
	The study of eigenfunctions for a localization operator $\aaf $ uses its  representation  as a 	Weyl one:
	  $$\gaw=L_{a\ast W(\f_2,\f_1)}$$ 
	(cf. \cite{EleCharly2003} and references therein) where $W(\f_2,\f_1)$ is the the cross-Wigner distribution defined in \eqref{CWD}.
	Therefore, 	the Weyl symbol of $\aaf$ is given by
	\begin{equation}\label{eq2}
	\sigma = a\ast W(\f_2,\f_1).
	\end{equation}
	We will deduce properties for $\gaw$ using its  Weyl form $L_{a\ast W(\f_2,\f_1)}$, as detailed below. Precisely, we shall focus on properties of eigenfunctions of compact localization operators. 
	\begin{theorem}\label{main}
		 Consider a symbol $a\in M^{p,\infty}(\rdd)$, for some  $ 0< p< \infty$, and non-zero windows $\f_1,\f_2\in\cS(\rd)$.
		 Assume that $\sigma_P(\aaf)\setminus\{0\}\not=\emptyset$. If $\lambda\in \sigma_P(\aaf)\setminus\{0\}$,   any eigenfunction $f\in\lrd$ with eigenvalue  $\lambda$  satisfies $f\in \bigcap_{\gamma>0} M^\gamma(\rd)$.
	\end{theorem}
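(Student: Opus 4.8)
The plan is to exploit the Weyl representation of the localization operator and then reduce everything to the eigenfunction result for Weyl operators already established in Proposition \ref{eigenfunctWeyl}. Recall from \eqref{eq2} that $\aaf = L_\sigma$ with Weyl symbol $\sigma = a \ast W(\f_2,\f_1)$. Thus an eigenfunction $f \in \lrd$ of $\aaf$ with eigenvalue $\lambda$ is precisely an eigenfunction of $L_\sigma$ with the same eigenvalue, and it suffices to verify that $\sigma$ satisfies the hypothesis of Proposition \ref{eigenfunctWeyl}, namely that $\sigma \in M^{p,\gamma}(\rdd)$ for every $\gamma > 0$, with the same fixed $p$ coming from $a \in M^{p,\infty}(\rdd)$.

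The heart of the argument is therefore the convolution membership $a \ast W(\f_2,\f_1) \in M^{p,\gamma}(\rdd)$. First I would observe that, since $\f_1,\f_2 \in \cS(\rd)$, the cross-Wigner distribution $W(\f_2,\f_1)$ belongs to $\cS(\rdd)$, and hence to every modulation space $M^{q,t}(\rdd)$, $0 < q,t \leq \infty$; in particular its indices can be chosen freely to match whatever Young/Hölder bookkeeping is required. Then I would apply the convolution relation of Proposition \ref{mconvmp} on $\rdd$ (i.e.\ with $2d$ in place of $d$), taking all weights trivial, $m \equiv v \equiv \nu \equiv 1$. Choosing $u = \infty$ for the first factor forces $t = \gamma$ through the Hölder condition $1/u + 1/t = 1/\gamma$, so the first factor is exactly $M^{p,\infty}$ while the output second index is the desired $\gamma$. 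For the first (Young) index I would set the output index equal to $p$: when $p \geq 1$ the relation $1/p + 1/q = 1 + 1/r$ with $r = p$ forces $q = 1$, whereas when $p < 1$ the regime $0 < r < 1$ requires $q = r = p$. In either case $q \in (0,\infty]$ is admissible and $W(\f_2,\f_1) \in M^{q,\gamma}(\rdd)$, so Proposition \ref{mconvmp} delivers $\sigma = a \ast W(\f_2,\f_1) \in M^{p,\gamma}(\rdd)$.

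Once this membership is established for every $\gamma > 0$, Proposition \ref{eigenfunctWeyl} applies directly and yields $f \in \bigcap_{\gamma>0} M^\gamma(\rd)$, completing the proof. The only genuinely delicate point is the index bookkeeping in the previous paragraph: one must check that the endpoint choice $u = \infty$, together with the case split $p \geq 1$ versus $p < 1$, stays inside the admissible range of Proposition \ref{mconvmp}, and that the Schwartz regularity of $W(\f_2,\f_1)$ is strong enough to absorb the resulting second factor $M^{q,\gamma}$ for all $\gamma$. Since Schwartz functions lie in every modulation space this causes no difficulty, and no compactness or self-adjointness of $\aaf$ is needed beyond the bootstrap already built into Proposition \ref{eigenfunctWeyl}.
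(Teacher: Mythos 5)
Your proposal is correct and follows essentially the same route as the paper's own proof: the Weyl representation $\aaf=L_{a\ast W(\f_2,\f_1)}$, the observation that $W(\f_2,\f_1)\in\cS(\rdd)\subset M^{q,\gamma}(\rdd)$, the application of Proposition \ref{mconvmp} with the identical case split ($q=1$, $r=p$ for $p\geq 1$ and $p=q=r$ for $0<p<1$) to get $\sigma\in M^{p,\gamma}(\rdd)$ for every $\gamma>0$, and the final appeal to Proposition \ref{eigenfunctWeyl}. Your index bookkeeping with $u=\infty$ and $t=\gamma$ is exactly what the paper leaves implicit, so no changes are needed.
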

	\begin{proof}
		Since the windows $\f_1,\f_2\in\cS(\rd)$, the cross-Wigner distribution is in $\cS(\rdd)\subset M^{q,\gamma}(\rdd)$, for every $0<q,\gamma\leq \infty$.
		We next apply the convolution relations for modulation spaces \eqref{mconvm}. Namely, if $p\geq 1$, choose  $q=1$, so that $r=p$,  and
		$$\sigma = a\ast W(\f_2,\f_1)\in  M^{p,\infty}(\rdd)\ast M^{1,\gamma}(\rdd)\hookrightarrow M^{p,\gamma}(\rdd),\quad \forall \gamma>0.$$
		If $0<p<1$, choose $p=q=r$ so that
		$$\sigma = a\ast W(\f_2,\f_1)\in  M^{p,\infty}(\rdd)\ast M^{p,\gamma}(\rdd)\hookrightarrow M^{p,\gamma}(\rdd),\quad \forall \gamma>0.$$
		 In both cases we obtain that $\sigma \in M^{p,\gamma}(\rdd)$, for every $\gamma>0$. Hence the claim immediately follows by Proposition \ref{eigenfunctWeyl}. 
	\end{proof}

As a consequence, the eigenfunctions  are extremely concentrated on the time-frequency space, having very few Gabor coefficients large whereas all the others are negligible.

Consider a \emph{Parseval} Gabor frame $\G(g,\Lambda)$ for $\lrd$, with  $\Lambda=\a \zd\times \beta\zd$ and $g \in \cS(\rd)$. For $N\in \bN_+$, we defined in \eqref{Sigma} the space $\Sigma_N$ 
to be the set of all linear combinations of Gabor atoms consisting of at most $N$ terms. 

Given a function $f\in\lrd$, the $N$-term approximation error in $\lrd$ is recalled in \eqref{sigma}.
Namely, $\sigma_N(f)$ is the error produced when $f$ is approximated optimally by a linear combination of $N$ Gabor atoms.  

Assume $f\in M^p(\rd)$ for some $0<p<2$ (thus, in particular,  $f\in\lrd$).  The series of Gabor coefficients in \eqref{Parseval} are absolutely convergent, hence also unconditionally convergent. Thus we can rearrange the Gabor coefficients $|\la f ,\pi(\lambda)g\ra|$ in a decreasing order. Precisely, set $c_{k,n}=\la f ,\pi(\a k, \beta n)g\ra$, $k,n\in \zd$,  and let $\iota: \bN_+ \to \zd\times \zd$ be any bijection  satisfying 
$$|c_{\iota(1)}|\geq |c_{\iota(2)}|\geq \cdots \geq |c_{\iota(m)}|\geq |c_{\iota(m+1)}|\geq \cdots
$$
The sequence $(\tilde{c}_m)_{m\in\bN_+} =(|c_{\iota(m)}|)_{m\in\bN_+}$ is called the non-increasing rearrangement of $(c_{k,n})_{k,n}$ above. With this notations, denoting by $p_{opt}$ the best approximation of $f$ in $\Sigma_N$, 
 the $N$-term approximation error becomes
\begin{align}\sigma_N(f)&=\inf_{p\in\Sigma_N} \|f-p\|_2=\|f-p_{opt}\|_2\leq \left\|f- \sum_{m=1}^{N}c_{\iota(m)}\pi(\iota(m))g\right\|_2\label{F12}\\
&\leq  C \left(\sum_{m=N+1}^\infty |c_{\iota(m)}|^2\right)^\frac12.\notag
\end{align}

By abuse of notation, given $a=(a_m)_m$, $a_m\geq 0$  for every $m$, a non-increasing sequence ($a_1\geq a_2\geq \cdots \geq a_m\geq a_{m+1}\geq \cdots$) we write
$$\sigma_N(a)=\left(\sum_{m=N+1}^\infty a_m^2\right)^\frac{1}{2}.
$$
The key tool is now the following lemma, see \cite {stechkin55} and \cite{devore-temlyakov96} (we also refer to \cite[Lemma 12.4.1]{Grochenig_2001_Foundations}):
\begin{lemma}\label{deVore}
	Let $a=(a_m)_m$, $a_m\geq 0$ for every $m$, be a non-increasing sequence and consider $0<p<2$. Set 
	\begin{equation}\label{alfadef}
	\gamma= \frac1p-\frac 12>0.
	\end{equation}
	Then there exists a constant $C=C(p)>0$, such that 
	\begin{equation}\label{est1}
	\frac{1}{C}\|a\|_{\ell^p}\leq \left(\sum_{N=1}^\infty(N^\gamma \sigma_{N-1}(a))^p\frac{1}{N}\right)^\frac1p\leq C\|a\|_{\ell^p}.
	\end{equation}
\end{lemma}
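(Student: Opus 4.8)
The plan is to reduce the two-sided estimate \eqref{est1} to a single Hardy-type inequality for one fixed power, and then to prove the upper and lower bounds separately, in each case exploiting the monotonicity of $(a_m)_m$ together with the hypothesis $p<2$.

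First I would raise the middle quantity in \eqref{est1} to the power $p$ and simplify the exponent of $N$. By \eqref{alfadef} one has $\gamma p=1-p/2$, so the weight collapses to $N^{\gamma p-1}=N^{-p/2}$. Setting $b_m:=a_m^2$, which is again a non-negative non-increasing sequence, and $\tau:=p/2\in(0,1)$, one has $\sigma_{N-1}(a)^p=\big(\sum_{m\ge N}a_m^2\big)^{p/2}=\big(\sum_{m\ge N}b_m\big)^{\tau}$ and $\|a\|_{\ell^p}^p=\sum_m b_m^{\tau}$. Hence \eqref{est1} is equivalent to the scale-free statement
\[
\sum_{N\ge 1} N^{-\tau}\Big(\sum_{m\ge N}b_m\Big)^{\tau}\;\asymp\;\sum_{m\ge 1} b_m^{\tau},
\]
and the crucial structural facts are that $\tau\in(0,1)$ makes $t\mapsto t^{\tau}$ subadditive while $1-\tau>0$ makes the relevant geometric series summable.

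For the lower bound I would keep only the first $N$ terms of the inner tail: by monotonicity $\sum_{m\ge N}b_m\ge\sum_{m=N}^{2N-1}b_m\ge N\,b_{2N-1}$, so $N^{-\tau}\big(\sum_{m\ge N}b_m\big)^{\tau}\ge b_{2N-1}^{\tau}$. Summing over $N$ and using $b_{2N-1}\ge b_{2N}$ gives $\sum_{N\ge1}b_{2N-1}^{\tau}\ge\frac12\sum_{m\ge1}b_m^{\tau}$, which is the desired lower bound.

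The upper bound is the genuine obstacle, since the naive route -- applying subadditivity directly as $\big(\sum_{m\ge N}b_m\big)^{\tau}\le\sum_{m\ge N}b_m^{\tau}$ and swapping the order of summation -- produces the divergent factor $\sum_{N\le m}N^{-\tau}\asymp m^{1-\tau}$ and fails. To repair this I would group the summation index dyadically: for $N\in[2^k,2^{k+1})$ there are $2^k$ terms, each bounded by $2^{-k\tau}T_k^{\tau}$ with $T_k:=\sum_{m\ge 2^k}b_m$, whence the left side is $\lesssim\sum_{k\ge0}2^{k(1-\tau)}T_k^{\tau}$. Writing $T_k=\sum_{j\ge k}B_j$ with $B_j:=\sum_{2^j\le m<2^{j+1}}b_m\le 2^j b_{2^j}$, subadditivity gives $T_k^{\tau}\le\sum_{j\ge k}2^{j\tau}b_{2^j}^{\tau}$; interchanging the order of summation and summing the geometric series $\sum_{k\le j}2^{k(1-\tau)}\lesssim 2^{j(1-\tau)}$ (here $1-\tau>0$ is essential) yields $\sum_{k}2^{k(1-\tau)}T_k^{\tau}\lesssim\sum_{j}2^j b_{2^j}^{\tau}$. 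A final comparison of $2^j b_{2^j}^{\tau}$ with the block sum $\sum_{2^{j-1}\le m<2^j}b_m^{\tau}$ (again by monotonicity) bounds this by $\sum_m b_m^{\tau}$, closing the estimate. Thus both inequalities hold with a constant $C=C(p)$, and the decisive use of $0<p<2$ enters through $\tau=p/2\in(0,1)$ in the two complementary roles of subadditivity and geometric decay.
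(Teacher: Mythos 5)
Your proof is correct. Note that the paper itself does not prove this lemma at all: it simply cites Stechkin, DeVore--Temlyakov, and \cite[Lemma 12.4.1]{Grochenig_2001_Foundations}, so there is no in-paper argument to compare against. Your reduction is clean: since $\gamma p-1=-p/2$, the substitution $b_m=a_m^2$, $\tau=p/2\in(0,1)$ turns \eqref{est1} into $\sum_N N^{-\tau}\bigl(\sum_{m\ge N}b_m\bigr)^{\tau}\asymp\sum_m b_m^{\tau}$; the lower bound via $\sum_{m\ge N}b_m\ge N b_{2N-1}$ and the upper bound via dyadic blocks, subadditivity of $t\mapsto t^{\tau}$, and the geometric series $\sum_{k\le j}2^{k(1-\tau)}\lesssim 2^{j(1-\tau)}$ (where $\tau<1$, i.e.\ $p<2$, is exactly what is needed) both check out, including the final comparison $2^jb_{2^j}^{\tau}\le 2\sum_{2^{j-1}\le m<2^j}b_m^{\tau}$. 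This is essentially the standard argument found in the cited references, and it correctly identifies the two complementary roles played by the hypothesis $p<2$.
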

\begin{proposition}\label{approx}
	Assume $f\in M^p(\rd)$ for some $0<p<2$. Then, there exists $C=C(p)>0$ such that the $N$-term approximation error satisfies
	\begin{equation}\label{approx2}
	\sigma_N(f)\leq C\|f\|_{M^p(\rd)}N^{-\gamma},
	\end{equation}
	where $\gamma>0$ is defined in \eqref{alfadef}.
\end{proposition}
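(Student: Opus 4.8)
The plan is to push the estimate down to the level of the Gabor coefficient sequence, where it becomes a purely sequential statement, and then to extract the claimed pointwise decay of the tail from the Hardy-type characterization in Lemma \ref{deVore}.

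First I would set $c_{k,n}=\langle f,\pi(\a k,\b n)g\rangle$ and apply the Gabor frame norm equivalence of Theorem \ref{framesmod} (valid since $g\in\cS(\rd)$ and $\G(g,\Lambda)$ is a \emph{Parseval} frame, so $S_{g,g}=I$), which gives $\|c\|_{\ell^p}\asymp\|f\|_{M^p}$ with constants independent of $p$. Because $0<p<2$, Theorem \ref{inclusionG} yields $M^p(\rd)\hookrightarrow M^2(\rd)=\lrd$, so $c\in\ell^2$ and the reconstruction series converges in $L^2$; the rearrangement $\tilde c=(\tilde c_m)_m$ and the estimate \eqref{F12} are then legitimate, giving $\sigma_N(f)\le C\,\sigma_N(\tilde c)$, where $\sigma_N(\tilde c)=\big(\sum_{m>N}\tilde c_m^2\big)^{1/2}$.

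Next I would feed $\tilde c$ into Lemma \ref{deVore} with $\gamma$ as in \eqref{alfadef}. The right-hand inequality in \eqref{est1} provides the averaged control $\sum_{N\ge1}\big(N^\gamma\sigma_{N-1}(\tilde c)\big)^p N^{-1}\le C^p\|\tilde c\|_{\ell^p}^p$.

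The decisive step, which I expect to be the main obstacle, is converting this averaged bound into the pointwise decay $\sigma_N(\tilde c)\lesssim\|\tilde c\|_{\ell^p}\,N^{-\gamma}$. Here I would exploit that $N\mapsto\sigma_N(\tilde c)$ is non-increasing: restricting the sum above to the dyadic block $N<M\le 2N$ and using $\sigma_{M-1}(\tilde c)\ge\sigma_{2N-1}(\tilde c)$ together with $\sum_{N<M\le 2N}M^{\gamma p-1}\gtrsim N^{\gamma p}$ (note $\gamma p=1-p/2\in(0,1)$, so the exponent lies in $(-1,0)$ and each of the $N$ terms is $\ge (2N)^{\gamma p-1}$), one obtains $\sigma_{2N-1}(\tilde c)^p\,N^{\gamma p}\lesssim\|\tilde c\|_{\ell^p}^p$. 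This yields the decay along the subsequence $2N-1$, and since $\sigma_N(\tilde c)$ is monotone and $N^{-\gamma}$ varies slowly, the bound transfers to every index $N$. Combining with $\sigma_N(f)\le C\sigma_N(\tilde c)$ and $\|\tilde c\|_{\ell^p}=\|c\|_{\ell^p}\asymp\|f\|_{M^p}$ gives \eqref{approx2}.
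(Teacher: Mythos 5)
Your proposal is correct and follows essentially the same route as the paper: Gabor frame norm equivalence, the majorization \eqref{F12}, and the right-hand inequality of Lemma \ref{deVore}. The only difference is that you spell out the conversion from the averaged bound to the pointwise decay (via monotonicity of $N\mapsto\sigma_N(\tilde c)$ and a dyadic block), a step the paper leaves implicit with ``we infer \eqref{approx2}''; your version of that step is sound.
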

\begin{proof}
	If $\G(g,\Lambda)$ is a Parseval Gabor frame with $g \in \cS(\rd)$, and $\Lambda=\a \zd\times \beta \zd$, $\alpha,\beta>0$, then the sequence of Gabor  coefficients of $f$, given by $(\la f ,\pi(\a k,\beta n)g\ra)_{k,n\in\zd}$, are in $\ell^{p}(\Lambda)$ by Theorem \ref{framesmod}, with $$\|f\|_{M^p} \asymp \|(\la f ,\pi(\a k,\beta n)g\ra)_{k,n}\|_{\ell^p(\Lambda)}$$ and  the sequence $(|\la f ,\pi(\a k,\beta n)g\ra|)_{k,n}$ can be rearranged in a non-increasing one $(a_m)$, as  explained above. Applying Lemma \ref{deVore} to such a sequence, from the right-hand side inequality in \eqref{est1} and using the majorization in \eqref{F12} we infer \eqref{approx2}.
	\end{proof}
	
\begin{corollary}\label{bo1}
	Consider a Parseval Gabor frame $\G(g,\Lambda)$, with $g \in \cS(\rd)$, and $\Lambda=\a \zd\times \beta \zd$, $\alpha,\beta>0$. Under the assumptions of Theorem \ref{main}, any $f$ eigenfunction of $\aaf$ (with eigenvalue $\lambda\not=0$) is highly compressed onto a few Gabor atoms $\pi(\lambda)g$, in the sense that its $N$-term approximation error satisfies the following property: for every $r>0$ there exists $C=C(r,f)>0$ with  
	\begin{equation}\label{bo2}
\sigma_N(f)\leq  C N^{-r}.
	\end{equation}
	\end{corollary}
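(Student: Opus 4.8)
The plan is to combine Theorem~\ref{main} with Proposition~\ref{approx}. By Theorem~\ref{main}, any eigenfunction $f$ of $\aaf$ with eigenvalue $\lambda\neq 0$ satisfies $f\in\bigcap_{\gamma>0}M^\gamma(\rd)$. In particular, for \emph{every} $p\in(0,2)$ we have $f\in M^p(\rd)$, so Proposition~\ref{approx} applies to $f$ for each such $p$.

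Fix an arbitrary $r>0$. The idea is to choose $p$ small enough so that the exponent $\gamma=\frac1p-\frac12$ furnished by Proposition~\ref{approx} exceeds $r$. Concretely, first I would pick $p\in(0,2)$ with $\frac1p-\frac12\geq r$, equivalently $p\leq \bigl(r+\tfrac12\bigr)^{-1}$; since $r>0$ this forces $p<2$, so the hypothesis $0<p<2$ of Proposition~\ref{approx} is met. With this choice, set $\gamma=\frac1p-\frac12\geq r$. Applying Proposition~\ref{approx} to $f\in M^p(\rd)$ yields a constant $C_0=C_0(p)>0$ such that
\begin{equation*}
\sigma_N(f)\leq C_0\,\|f\|_{M^p}\,N^{-\gamma}\leq C_0\,\|f\|_{M^p}\,N^{-r},\qquad N\in\bN_+,
\end{equation*}
where in the last step I used $N^{-\gamma}\leq N^{-r}$ (valid since $\gamma\geq r$ and $N\geq 1$). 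Setting $C=C(r,f)=C_0\,\|f\|_{M^p}$, which is finite because $f\in M^p(\rd)$, gives precisely the desired bound \eqref{bo2}.

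There is essentially no obstacle here: the corollary is a direct packaging of the two preceding results, and the only minor point is to record that the constant $C$ depends on $r$ (through the choice of $p$ and hence $C_0$) and on $f$ (through the finite modulation-space norm $\|f\|_{M^p}$), consistent with the statement $C=C(r,f)$. The super-polynomial decay is then immediate: since $r>0$ was arbitrary, $\sigma_N(f)$ decays faster than any negative power of $N$, which is the asserted high compression of the eigenfunction onto finitely many Gabor atoms.
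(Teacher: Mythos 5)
Your argument is correct and matches the paper's proof exactly: both deduce from Theorem \ref{main} that $f\in M^p(\rd)$ for every $0<p<2$ and then invoke Proposition \ref{approx}, choosing $p$ small enough that $\gamma=\frac1p-\frac12\geq r$. You simply make explicit the choice of $p$ and the dependence of the constant, which the paper leaves implicit.
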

	\begin{proof}
		By Theorem \ref{main}, the eigenfunction fulfils $f\in M^{p}(\rd)$, for every $p>0$. Hence the assumptions of Proposition \ref{approx} are satisfied for every $0<p<2$. This immediately yields the claim.
	\end{proof}

We next consider the case of compact localization operators with symbols $a\in M^{\infty}_{v_s\otimes 1}(\rdd)$, $s>0$. In this case $L^2$ eigenfunctions reveal to be Schwartz functions, as shown below.
\begin{theorem}\label{main2}
	Consider a symbol $a\in M^{\infty}_{v_s\otimes 1}(\rdd)$, for some  $ s>0$, and non-zero windows $\f_1,\f_2\in\cS(\rd)$.
	Assume that $\sigma_P(\aaf)\setminus\{0\}\not=\emptyset$. If $\lambda\in \sigma_P(\aaf)\setminus\{0\}$,   any eigenfunction $f\in\lrd$ with eigenvalue  $\lambda$  belongs to the Schwartz class 
	$\cS(\rd)$.
\end{theorem}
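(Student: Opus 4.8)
The plan is to mirror the proof of Theorem \ref{main}, replacing Proposition \ref{eigenfunctWeyl} by its weighted analogue Proposition \ref{eigenfunctWeyl2}. First I would pass to the Weyl form of the localization operator via \eqref{eq2}: $\aaf=L_\sigma$ with symbol $\sigma=a\ast W(\f_2,\f_1)$. Since $\f_1,\f_2\in\cS(\rd)$, the cross-Wigner distribution $W(\f_2,\f_1)$ is a Schwartz function on $\rdd$, hence it belongs to $M^1_{v_s\otimes v_t}(\rdd)$ for every $t>0$ (indeed $\cS(\rdd)$ embeds into $M^1_w$ for any weight $w$ of polynomial growth). The whole point is then to transfer the weighted decay of $a$ to $\sigma$ through a convolution relation.

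To do so I would invoke Proposition \ref{mconvmp} on $\rdd$, that is with underlying dimension $2d$, choosing the target weight $m=v_s\otimes v_t$, so that its restrictions are $m_1=v_s$ and $m_2=v_t$, and taking the auxiliary weight $\nu\equiv 1$. With the endpoint indices $p=u=\infty$ and $r=\infty$, Young's condition $1/p+1/q=1+1/r$ forces $q=1$, while Hölder's condition $1/u+1/t'=1/\gamma$ with $\gamma=1$ forces the second Lebesgue index $t'$ of the right factor to equal $1$. For these choices the first factor's weight $m_1\otimes\nu$ is exactly $v_s\otimes 1$ and the right factor's weight $v_1\otimes v_2\nu^{-1}$ is exactly $v_s\otimes v_t$, so \eqref{mconvm} reads
\[
M^{\infty}_{v_s\otimes 1}(\rdd)\ast M^{1}_{v_s\otimes v_t}(\rdd)\hookrightarrow M^{\infty,1}_{v_s\otimes v_t}(\rdd).
\]
Applying this with $a\in M^{\infty}_{v_s\otimes 1}$ and $W(\f_2,\f_1)\in M^{1}_{v_s\otimes v_t}$ yields $\sigma=a\ast W(\f_2,\f_1)\in M^{\infty,1}_{v_s\otimes v_t}(\rdd)$, for the fixed $s>0$ and every $t>0$.

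The conclusion is then immediate. The Weyl symbol $\sigma$ satisfies precisely the hypothesis of Proposition \ref{eigenfunctWeyl2} (membership in $M^{\infty,1}_{v_s\otimes v_t}$ for some $s>0$ and every $t>0$), so any $L^2(\rd)$ eigenfunction $f$ of $L_\sigma=\aaf$ with eigenvalue $\lambda\neq 0$ belongs to $\cS(\rd)$, which is the assertion.

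I expect no deep obstacle: the argument is a direct composition of \eqref{eq2}, Proposition \ref{mconvmp}, and Proposition \ref{eigenfunctWeyl2}. The only delicate point is the simultaneous bookkeeping of weights and indices in Proposition \ref{mconvmp}; one must check that the endpoint values $p=u=\infty$, $r=\infty$, $q=t'=1$, $\gamma=1$ are jointly admissible in both the Young and the Hölder constraint, and that the choice $\nu\equiv1$ makes the two factor weights collapse onto exactly $v_s\otimes1$ and $v_s\otimes v_t$. Once this matching is verified, the rest is automatic.
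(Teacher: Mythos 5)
Your proposal is correct and follows essentially the same route as the paper: write $\aaf=L_\sigma$ with $\sigma=a\ast W(\f_2,\f_1)$, use $W(\f_2,\f_1)\in\cS(\rdd)\subset M^{1,1}_{v_s\otimes v_t}(\rdd)$, apply Proposition \ref{mconvmp} to get $\sigma\in M^{\infty,1}_{v_s\otimes v_t}(\rdd)$ for every $t>0$, and conclude via Proposition \ref{eigenfunctWeyl2}. Your explicit bookkeeping of the indices ($p=u=r=\infty$, $q=\gamma=1$, $\nu\equiv1$, $v=v_s\otimes v_t$) is exactly the instantiation the paper leaves implicit, and it checks out.
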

\begin{proof}
The assumption $\f_1,\f_2\in\cS(\rd)$ implies $W(\f_2,\f_1)\in \cS(\rdd)\subset M^{1,1}_{v_r\otimes v_t}(\rdd)$, for every $r,t>0$.
	We next apply the convolution relations for modulation spaces \eqref{mconvm}, obtaining that $\aaf=L_\sigma$ with $\sigma \in M^{\infty,1}_{v_s\otimes v_t}(\rdd)$, for some $s>0$ and every $t>0$. Hence the claim immediately follows by Proposition \ref{eigenfunctWeyl2}. 
\end{proof}
\begin{remark}\label{schatten-comp}
	The nice properties of eigenfunctions for localization operators studied so far seem to depend on the fact that such operators are not only compact but belong to the Schatten class $S_p$, $0<p<\infty$ (cf. \cite[Theorem 1]{ECSchattenloc2005}). Fern\'{a}ndez and  Galbis in \cite[Theorem 3.15]{FG2006} characterize compact localization operators. Namely, fix $g_0\in\cS(\rd)$ and $a\in M^\infty(\rdd)$, then   the following conditions are equivalent:\\
	(i) The localization operator $\aaf$ is compact on $\lrd$ for every $\f_1,\f_2$ in $\cS(\rd)$;\\
	(ii) For every $R>0$,
	\begin{equation}\label{compsymb}
	\lim_{|x|\to \infty }\sup_{|\xi|\leq R} V_{g_0} a\phas =0 .
	\end{equation}
	It seems that for symbols satisfying condition \eqref{compsymb} the techniques developed above do not work anymore. It would be very interesting to know whether for compact operators that are not in the Schatten class $S_p$, $0<p<\infty$, the $L^2$ eigenfunctions do gain any additional smoothness and regularity.
	This topic will be investigated in a subsequent paper.
\end{remark}

%Quello che dovrebbe essere vero e’ che se una successione e’ in \ell^p per ogni p<1 allora il suo riordinamento decrescente e’ a decrescenza rapida, vedi Remark 1 a pag. 275 del libro di Grochenig. 
%
%Quindi nel nostro caso, se una autofunzione f sta in M^p per ogni p<1, possiamo dire, penso, che per ogni naturale n esistono n box (nel piano tempo-frequenza) tale che l’energia di f fuori da questi e’ <C_N n^{-N}. In questo senso le autofunzioni sono estremamente concentrate (comprimibili), sebbene non necessariamente vicino all’origine (come sarebbe nel caso di funzioni Schwartz). Cosa ne pensi?
\section{Symbols in $L^p(\rdd)$ spaces}
We now consider localization operators with symbols in weighted Lebesgue spaces. Let us recall 
that any localization operator $\gaw$ with windows in $\cS(\rd)$ and symbol $a\in L^q(\rdd)$, with $1\leq q<\infty$, is a compact operator, cf. \cite[Proposition 13.3]{WongLocalization}. The case of weighted Lebesgue spaces and, more generally, Potential Sobolev spaces was treated in \cite{BCG02}: let us stress that any localization operator $\aaf$ with Schwartz windows and symbol $a$ in  $L^q_m(\rdd)$, with $1\leq q<\infty$ is a compact operator on $\lrd$.

\begin{theorem}\label{mainLp}
	Let $m\in\mathcal{M}_v$, $m(z)\geq 1$ for every $z\in \rdd$, $a\in L^q_m(\rdd)$, $1\leq q< \infty$, and non-zero windows $\f_1,\f_2\in\cS(\rd)$. Assume that $\sigma_P(\aaf)\setminus\{0\}\not=\emptyset$. If $\lambda\in \sigma_P(\aaf)\setminus\{0\}$,   any eigenfunction $f\in\lrd$ with eigenvalue  $\lambda$  satisfies 
	$f\in \bigcap_{p>0} M^p_m(\rd)$.
\end{theorem}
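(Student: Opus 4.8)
The plan is to mirror the proofs of Theorems \ref{main} and \ref{main2}: write $\aaf=L_\sigma$ with Weyl symbol $\sigma=a\ast W(\f_2,\f_1)$, locate $\sigma$ in a weighted modulation space that is small in the frequency index and carries the position weight $m$, and then bootstrap a weighted continuity statement for $L_\sigma$ starting from the a priori information $f\in\lrd=M^2(\rd)$.

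First I would record the embedding $L^q_m(\rdd)\hookrightarrow M^{q,\infty}_{m\otimes1}(\rdd)$. Fixing a window $G\in\cS(\rdd)$ and setting $\widetilde G(X)=|G(-X)|$, the pointwise bound $|V_G a(X,\Xi)|\le (|a|\ast\widetilde G)(X)$, which is independent of $\Xi$, together with the weighted Young inequality $\|\,|a|\ast\widetilde G\,\|_{L^q_m}\le C\|a\|_{L^q_m}\|\widetilde G\|_{L^1_v}$ (valid since $q\ge1$ and $G\in\cS$), yields $\|a\|_{M^{q,\infty}_{m\otimes1}}\lesssim\|a\|_{L^q_m}$. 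Since $\f_1,\f_2\in\cS(\rd)$ give $W(\f_2,\f_1)\in\cS(\rdd)\subset M^{1,\gamma}_{v\otimes v_t}(\rdd)$ for all $\gamma,t>0$, the convolution relations of Proposition \ref{mconvmp} (applied with output weight $m\otimes v_t$, inner weight $\nu\equiv1$, and Young exponents $q,1\mapsto q$) then give
$$\sigma=a\ast W(\f_2,\f_1)\in M^{q,\gamma}_{m\otimes v_t}(\rdd),\qquad\forall\,\gamma>0,\ \forall\,t>0.$$
Thus $\sigma$ has arbitrarily fast frequency decay, arbitrary frequency weight $v_t$, and position weight $m$.

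The heart of the matter is a weighted, quasi-Banach continuity result for $L_\sigma$: there is $t_0>0$ so that if $\sigma\in M^{q,\gamma}_{m\otimes v_t}(\rdd)$ for every $\gamma>0$ and some $t\ge t_0$, then $L_\sigma$ maps $M^s(\rd)$ into $M^{s'}_m(\rd)$ and $M^s_m(\rd)$ into $M^{s'}_m(\rd)$, with $1/s'=1/q+1/s$. Granting this, the proof closes by iteration as in Proposition \ref{eigenfunctWeyl}: from $f\in M^2$ and $f=\lambda\inv L_\sigma f$ one successively obtains $f\in M^{s_n}_m(\rd)$ with $1/s_n=n/q+1/2\to\infty$, so $s_n\to0$; since $s_n$ eventually drops below any prescribed $p>0$, the first-index inclusion relations of Theorem \ref{inclusionG} give $f\in M^p_m(\rd)$, whence $f\in\bigcap_{p>0}M^p_m(\rd)$.

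I expect the weighted continuity to be the main obstacle. Following the scheme of Theorem \ref{Charpseudo}, I would represent $L_\sigma$ on the STFT side by the integral operator whose kernel is $|\la L_\sigma\pi(z)g,\pi(w)g\ra|=|V_\Phi\sigma(\tfrac{z+w}2,j(w-z))|$ from Lemma \ref{lemma41}. The position weight $m$ on $\sigma$ dominates this kernel by $m(\tfrac{z+w}2)\inv$, and the $v$-moderateness of $m$ lets me transfer the weight to the output variable $w$ (and past the input variable $z$ when iterating) at the cost of a factor $v(\tfrac{w-z}2)$, which is absorbed by the frequency weight $v_t$ once $t$ is large; this is the exact role played, in the $L^2$ setting, by the elementary estimate \eqref{estF1}. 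The residual kernel is convolution-dominated in $w-z$ with position profile in an $L^q$-type space, which furnishes the index gain $1/s'=1/q+1/s$. The delicate point is that the domain and range indices $s,s'$ must range over all of $(0,\infty)$, so the $L^2$-based Schur argument of Theorem \ref{Charpseudo}(ii) has to be replaced by estimates valid in the quasi-Banach regime; here I would discretize through a Gabor frame, as in the proof of Proposition \ref{mconvmp}, reducing the boundedness to weighted Young inequalities for sequences, which hold for all exponents in $(0,\infty]$.
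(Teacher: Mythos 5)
Your strategy is genuinely different from the paper's, and as written it has a gap at its center. The paper does \emph{not} pass through the Weyl calculus for this theorem: it works directly with the localization operator, writing $V_g(\aaf f)(w)$ as an integral of $(aV_{\f_1}f)(z)$ against $\la \pi(z)\f_2,\pi(w)g\ra$ and obtaining the pointwise bound $|V_g(\aaf f)|\leq |a\,V_{\f_1}f|\ast|V_g\f_2|$. The weight $m$ then rides along through the Wiener amalgam machinery: $a\in L^q_m=W(L^q,L^q_m)$, $V_{\f_1}f\in W(L^\infty,L^{p_n}_m)$ by Theorem \ref{G33}, the pointwise product lands in $W(L^q,L^{p_{n+1}}_{m^2})\hookrightarrow W(L^q,L^{p_{n+1}}_{m})$ by Proposition \ref{MR} (this is precisely why that proposition is proved in the quasi-Banach setting, and why the hypothesis $m\geq 1$ appears in the statement), and convolution with $V_g\f_2\in W(L^{q'},L^1_v)$ via \eqref{convWiener} gives $V_g(\aaf f)\in W(L^\infty,L^{p_{n+1}}_m)\hookrightarrow L^{p_{n+1}}_m$. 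The index gain $1/p_{n+1}=1/q+1/p_n$ and the bootstrapping are the same as yours, but no mapping theorem for Weyl operators is needed.

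The gap in your version is the ``weighted, quasi-Banach continuity result for $L_\sigma$'': that $\sigma\in M^{q,\gamma}_{m\otimes v_t}$ for all $\gamma>0$ implies $L_\sigma\colon M^{s}_m(\rd)\to M^{s'}_m(\rd)$ for every $s\in(0,\infty)$ with $1/s'=1/q+1/s$. This is not Theorem \ref{Charpseudo}, it is not available in the paper or in the cited references in the form you need, and your sketch of it does not go through as stated: the pointwise domination of the kernel $\left|V_\Phi\sigma\left(\tfrac{z+w}{2},j(w-z)\right)\right|$ by $m\left(\tfrac{z+w}{2}\right)^{-1}\la w-z\ra^{-t}$ is only available when the symbol lies in an $M^{\infty,\cdot}$-type space, as in part (ii) of Theorem \ref{Charpseudo}; for $\sigma\in M^{q,\gamma}$ with $q<\infty$ you control only a mixed $L^{q,\gamma}$ norm of the kernel in the rotated coordinates, so the Schur/convolution-dominated argument must be replaced by the full mixed-norm discretized estimate with weights, for all quasi-Banach exponents. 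That estimate is essentially a weighted extension of Toft's quasi-Banach theorem and is where the entire difficulty of the proof would be concentrated; asserting it with a two-sentence sketch leaves the argument incomplete. (Your preliminary steps, the embedding $L^q_m\hookrightarrow M^{q,\infty}_{m\otimes 1}$ and the application of Proposition \ref{mconvmp} to place $\sigma$ in $M^{q,\gamma}_{m\otimes v_t}$, are correct, and the final iteration is fine once the lemma is granted.) To salvage your route you must prove that weighted continuity lemma in full; otherwise the paper's direct amalgam-space argument is both shorter and avoids the issue entirely.
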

\begin{proof}
	By assumption and using \eqref{pWiener}, we start with a symbol $a$  in  $L^q_m(\rdd)=W(L^q,L^q_m)(\rdd)$. Consider the eigenvector $f\in L^2(\rd)$ and the window $\f_1\in\cS(\rd)$. Then   by Theorem \ref{G33} the STFT $V_{\f_1} f$ is in the Wiener amalgam space $W(L^\infty, L^2)(\rdd)$. Proposition \ref{MR} yields that $aV_{\f_1} f\in W(L^q,L^{p_1}_m)(\rdd)$, with
	$$\frac 1q+\frac12=\frac{1}{p_1},$$
	so that the index $p_1$ satisfies $p_1<\min\{q,2\}$.  Consider now a non-zero window $g\in\cS(\rd)$. Using \eqref{CR},
	\begin{align*}
	V_g (\aaf f)(w)&=\langle \aaf f, \pi(w)g\ra =\intrdd \left(aV_{\f_1} f\right)(z)\la \pi(z)\f_2, \pi(w)g\ra \,dz\\
	&= \intrdd \left(aV_{\f_1} f\right)(z)\la \f_2, \pi(-z)\pi(w)g\ra \,dz\\
	&=\intrdd \left(aV_{\f_1} f\right)(z) e^{-2\pi i z_1 w_2}\la \f_2, \pi(w-z)g\ra \,dz
	\end{align*}
	so that,
	\begin{equation}\label{convloc}
	|V_g (\aaf f)(w)|\leq \intrdd |(a V_{\f_1}f)(z)|\,|V_g \f_2 (w-z)| dz =|a V_{\f_1}f|\ast |V_g \f_2|(w) .
	\end{equation}
		We estimate $$|V_g (\aaf f)(w)|\leq |a V_{\f_1}f|\ast |V_g \f_2|(w)\in W(L^q,L^{p_1}_m)(\rdd)\ast W(L^\infty,L^1_v)(\rdd).$$ Observing that $W(L^\infty,L^1_v)(\rdd)\hookrightarrow W(L^{q'},L^1_v)(\rdd)$  and applying the convolution relations \eqref{convWiener} we infer
	$|V_g (\aaf f)| \in W(L^\infty,L^{p_1}_m) \hookrightarrow L^{p_1}_m$. This proves that 
	$\aaf f\in M^{p_1}_m(\rd)$.
	
	Recalling the assumption $\aaf f=\lambda f$, $\lambda \not=0$, we infer $f\in M^{p_1}_m(\rd)$.
	
	We now repeat the previous argument starting with $f\in M^{p_1}_m(\rd)$.  By Theorem \ref{G33} the STFT $V_{\f_1} f\in W(L^\infty, L^{p_1}_m)(\rdd)$  and $aV_{\f_1} f\in W(L^q, L^{p_2}_{m^2})\hookrightarrow W(L^q, L^{p_2}_m)$, (since $m^2\geq m$), with
	$$ \frac 1q+ \frac{1}{p_1}=\frac{1}{p_2}, 
	$$
	so that $ p_2<p_1.$ Arguing as above we infer  $|V_g (\aaf f)(w)|\in W(L^\infty,L^{p_2}_m) \hookrightarrow L^{p_2}_m$. Thus, the eigenfunction $f$ belongs to the smaller space $M^{p_2}_m$.
	
	Continuing this way we construct a strictly decreasing sequence of indices $p_n>0$ and such that $$\lim_{n\to \infty} p_n =0.$$
	By induction and using the same argument as above one immediately obtains that if $f\in 
	M^{p_n}_m(\rd)$ then $f\in M^{p_{n+1}}_m(\rd)$. This concludes the proof.
\end{proof}

\section*{Acknowledgments}  The  last two authors were partially supported by  the Gruppo Nazionale per l'Analisi Matematica, la Probabilit\`a e le loro Applicazioni (GNAMPA) of the Istituto Nazionale di Alta Matematica (INdAM).


\begin{thebibliography}{10}
	
	\bibitem{Abreu2012}
	L.~D. Abreu and M.~D\"orfler.
	\newblock An inverse problem for localization operators.
	\newblock {\em Inverse Problems}, 28(11):115001, 16, 2012.
	
	
	\bibitem{Abreu2016}
	L.~D. Abreu, K.~Gr\"{o}chenig, and J.~L. Romero.
	\newblock On accumulated spectrograms.
	\newblock {\em Trans. Amer. Math. Soc.}, 368(5):3629--3649, 2016.
	
	\bibitem{Abreu2017}
	L.~D. Abreu, J.~a.~M. Pereira, and J.~L. Romero.
	\newblock Sharp rates of convergence for accumulated spectrograms.
	\newblock {\em Inverse Problems}, 33(11):115008, 12, 2017.
	
	\bibitem{BG2015}
	D.~Bayer and K.~Gr\"{o}chenig.
	\newblock Time-frequency localization operators and a {B}erezin transform.
	\newblock {\em Integral Equations Operator Theory}, 82(1):95--117, 2015.
	
	\bibitem{Berezin71}
	F.~A. Berezin.
	\newblock Wick and anti-{W}ick symbols of operators.
	\newblock {\em Mat. Sb. (N.S.)}, 86(128):578--610, 1971.
	
	\bibitem{BCG02}
	P.~Boggiatto, E.~Cordero, and K.~Gr\"{o}chenig.
	\newblock Generalized anti-{W}ick operators with symbols in distributional
	{S}obolev spaces.
	\newblock {\em Integral Equations Operator Theory}, 48(4):427--442, 2004.
	
	\bibitem{EleCharly2003}
	E.~Cordero and K.~Gr\"ochenig.
	\newblock Time-frequency analysis of localization operators.
	\newblock {\em J. Funct. Anal.}, 205(1):107--131, 2003.
	
	\bibitem{ECSchattenloc2005}
	E.~Cordero and K.~Gr\"ochenig.
	\newblock Necessary conditions for {S}chatten class localization operators.
	\newblock {\em Proc. Amer. Math. Soc.}, 133(12):3573--3579, 2005.
	
	\bibitem{Wignersharp2018}
	E.~Cordero and F.~Nicola.
	\newblock Sharp integral bounds for {W}igner distributions.
	\newblock {\em Int. Math. Res. Not. IMRN}, (6):1779--1807, 2018.
	
	\bibitem{medit}
	E.~Cordero, S.~Pilipovi\'c, L.~Rodino, and N.~Teofanov.
	\newblock Localization operators and exponential weights for modulation spaces.
	\newblock {\em Mediterr. J. Math.}, 2(4):381--394, 2005.
	
	\bibitem{DB1}
	I.~Daubechies.
	\newblock Time-frequency localization operators: a geometric phase space
	approach.
	\newblock {\em IEEE Trans. Inform. Theory}, 34(4):605--612, 1988.
	
	\bibitem{DB2}
	I.~Daubechies and T.~Paul.
	\newblock Time-frequency localization operators---a geometric phase space
	approach. {II}. {T}he use of dilations.
	\newblock {\em Inverse Problems}, 4(3):661--680, 1988.
	
	\bibitem{deGossonsymplectic2011}
	M.~A. de~Gosson.
	\newblock {\em Symplectic methods in harmonic analysis and in mathematical
		physics}, volume~7 of {\em Pseudo-Differential Operators. Theory and
		Applications}.
	\newblock Birkh\"auser/Springer Basel AG, Basel, 2011.
	
	\bibitem{Maurice2015}
	M.~A. de~Gosson.
	\newblock The canonical group of transformations of a {W}eyl-{H}eisenberg
	frame; applications to {G}aussian and {H}ermitian frames.
	\newblock {\em J. Geom. Phys.}, 114:375--383, 2017.
	
	\bibitem{devore-temlyakov96}
	R.~A. DeVore and V.~N. Temlyakov.
	\newblock Some remarks on greedy algorithms.
	\newblock {\em Adv. Comput. Math.}, 5(2-3):173--187, 1996.
	
	\bibitem{Feichtinger_1981_Banach}
	H.~G. Feichtinger.
	\newblock Banach spaces of distributions of {W}iener's type and interpolation.
	\newblock In {\em Functional analysis and approximation ({O}berwolfach, 1980)},
	volume~60 of {\em Internat. Ser. Numer. Math.}, pages 153--165. Birkh\"auser,
	Basel-Boston, Mass., 1981.
	
	\bibitem{feichtinger-wiener-type}
	H.~G. Feichtinger.
	\newblock Banach convolution algebras of {W}iener type.
	\newblock In {\em Functions, series, operators, Vol. I, II (Budapest, 1980)},
	pages 509--524. North-Holland, Amsterdam, 1983.
	
	\bibitem{Feichtinger_1990_Generalized}
	H.~G. Feichtinger.
	\newblock Generalized amalgams, with applications to {F}ourier transform.
	\newblock {\em Canad. J. Math.}, 42(3):395--409, 1990.
	
	\bibitem{feichtinger-modulation}
	H.~G. Feichtinger.
	\newblock Modulation spaces on locally compact abelian groups.
	\newblock In {\em Technical report, University of Vienna, 1983, and also in
		``Wavelets and Their Applications''}, pages 99--140. M. Krishna, R. Radha, S.
	Thangavelu, editors, Allied Publishers, 2003.

	\bibitem{feichtinger-grochenig1997}
	H.~G. Feichtinger and K.~Gr{\"o}chenig.
	\newblock Gabor frames and time-frequency analysis of distributions.
	\newblock {\em J. Funct. Anal.}, 146(2):464--495, 1997.	
	
	\bibitem{FGM2003}
	H.~G. Feichtinger and K. Nowak.
	\newblock A first survey of {G}abor multipliers.
	\newblock In {\em Advances in {G}abor analysis}, Appl. Numer. Harmon. Anal.,  pages 99--128. Birkh\"{a}user Boston, Boston, MA, 2003.

	\bibitem{FG2006}
	C. Fern\'{a}ndez and A. Galbis.
			\newblock  Compactness of time-frequency localization operators on
			{$L^2(\Bbb R^d)$}.
				\newblock {\em J. Funct. Anal.}, 233(2):335--350, 2006.
		\bibitem{FG2007}
		C. Fern\'{a}ndez and A. Galbis.
			\newblock Some remarks on compact {W}eyl operators. 
			\newblock {\em Integral Transforms Spec. Funct.}, 18(7-8):599--607, 2007.
			\bibitem{FGP2017}
		C. Fern\'{a}ndez,  A. Galbis. and E. Primo.
			\newblock Compactness of {F}ourier integral operators on weighted
				modulation spaces.
			\newblock {\em Trans. Amer. Math. Soc.}, 372(1):733--753, 2019.
	\bibitem{Fournier_1985_Amalgams}
	J.~J.~F. Fournier and J.~Stewart.
	\newblock Amalgams of {$L^p$} and {$l^q$}.
	\newblock {\em Bull. Amer. Math. Soc. (N.S.)}, 13(1):1--21, 1985.
	
	\bibitem{Galperin2014}
	Y.~V. Galperin.
	\newblock Young's convolution inequalities for weighted mixed (quasi-) norm
	spaces.
	\newblock {\em J. Inequal. Spec. Funct.}, 5(1):1--12, 2014.
	
	\bibitem{Galperin2004}
	Y.~V. Galperin and S.~Samarah.
	\newblock Time-frequency analysis on modulation spaces {$M^{p,q}_m$}, {$0<p,\
		q\leq\infty$}.
	\newblock {\em Appl. Comput. Harmon. Anal.}, 16(1):1--18, 2004.
	
	\bibitem{Grochenig_2001_Foundations}
	K.~Gr{\"o}chenig.
	\newblock {\em Foundations of time-frequency analysis}.
	\newblock Applied and Numerical Harmonic Analysis. Birkh\"auser Boston, Inc.,
	Boston, MA, 2001.
	
	\bibitem{Grochenig_2006_Time}
	K.~Gr{\"o}chenig.
	\newblock Time-frequency analysis of {S}j\"ostrand's class.
	\newblock {\em Rev. Mat. Iberoam.}, 22(2):703--724, 2006.
	
	\bibitem{GroLyu2009}
	K.~Gr\"{o}chenig and Y.~Lyubarskii.
	\newblock Gabor (super)frames with {H}ermite functions.
	\newblock {\em Math. Ann.}, 345(2):267--286, 2009.
	
	\bibitem{CharlyToft2011}
	K.~Gr\"{o}chenig and J.~Toft.
	\newblock Isomorphism properties of {T}oeplitz operators and
	pseudo-differential operators between modulation spaces.
	\newblock {\em J. Anal. Math.}, 114:255--283, 2011.
	
	\bibitem{CharlyToft2013}
	K.~Gr\"{o}chenig and J.~Toft.
	\newblock The range of localization operators and lifting theorems for
	modulation and {B}argmann-{F}ock spaces.
	\newblock {\em Trans. Amer. Math. Soc.}, 365(8):4475--4496, 2013.
	
	\bibitem{Guo2019}
	W. Guo, J. Chen, D. Fan and G. Zhao.
 	\newblock	Characterizations of Some Properties on Weighted Modulation and Wiener Amalgam Spaces
 		\newblock {\em Michigan Math. J.}, 68:451--482, 2019.
 	
	\bibitem{Heil-amalgam}
	C.~Heil.
	\newblock An introduction to weighted wiener amalgams.
	\newblock In {\em In: Wavelets and their Applications, M. Krishna, R. Radha and
		S. Thangavelu, eds.,}, pages 183--216. Allied Publishers, New Delhi, 2003.
	
	\bibitem{Kobayashi2006}
	M.~Kobayashi.
	\newblock Modulation spaces {$M^{p,q}$} for {$0<p,q\leq\infty$}.
	\newblock {\em J. Funct. Spaces Appl.}, 4(3):329--341, 2006.
	
	\bibitem{Kobayashi2007}
	M.~Kobayashi.
	\newblock Dual of modulation spaces.
	\newblock {\em J. Funct. Spaces Appl.}, 5(1):1--8, 2007.
	
	\bibitem{Luef1}
	F.~Luef and E.~Skrettingland.
	\newblock Mixed-state localization operators: {C}ohen's class and trace
	class operators.
	\newblock {\em J. Fourier Anal. Appl.}, 25(4):2064--2108, 2019.
	
	\bibitem{Luef2}
	F.~Luef and E.~Skrettingland.
	\newblock On accomulated {C}ohen's class distributions and mixed-state
	localization operators.
	\newblock {\em e-prints, arXiv:1808.06419}, 2018.
	
	\bibitem{Rauhut2007Coorbit}
	H.~Rauhut.
	\newblock Coorbit space theory for quasi-{B}anach spaces.
	\newblock {\em Studia Math.}, 180(3):237--253, 2007.
	
	\bibitem{Rauhut2007Winer}
	H.~Rauhut.
	\newblock Wiener amalgam spaces with respect to quasi-{B}anach spaces.
	\newblock {\em Colloq. Math.}, 109(2):345--362, 2007.
	
	\bibitem{Shubin91}
	M.~A. Shubin.
	\newblock {\em Pseudodifferential operators and spectral theory}.
	\newblock Springer-Verlag, Berlin, second edition, 2001.
	
	\bibitem{stechkin55}
	S.~B. Stechkin.
	\newblock On absolute convergence of orthogonal series.
	\newblock {\em Dokl. Akad. Nauk SSSR}, 102:37--40, 1955.
	
	\bibitem{Nenad2015}
	N.~Teofanov.
	\newblock Gelfand-{S}hilov spaces and localization operators.
	\newblock {\em Funct. Anal. Approx. Comput.}, 7(2):135--158, 2015.
	
	\bibitem{Nenad2016}
	N.~Teofanov.
	\newblock Continuity and {S}chatten--von {N}eumann properties for localization
	operators on modulation spaces.
	\newblock {\em Mediterr. J. Math.}, 13(2):745--758, 2016.
	
	\bibitem{Nenad2018}
	N.~Teofanov.
	\newblock Bilinear localization operators on modulation spaces.
	\newblock {\em J. Funct. Spaces}, pages Art. ID 7560870, 10, 2018.
	
	\bibitem{toft1}
	J.~Toft.
	\newblock Continuity properties for modulation spaces, with applications to
	pseudo-differential calculus. {I}.
	\newblock {\em J. Funct. Anal.}, 207(2):399--429, 2004.
	
	\bibitem{Toftweight2004}
	J.~Toft.
	\newblock Continuity properties for modulation spaces, with applications to
	pseudo-differential calculus. {II}.
	\newblock {\em Ann. Global Anal. Geom.}, 26(1):73--106, 2004.
	
		\bibitem{ToftquasiBanach2017}
		J.~Toft.
		\newblock Continuity and compactness for pseudo-differential operators
			with symbols in quasi-{B}anach spaces or {H}\"{o}rmander classes.
		\newblock {\em Anal. Appl. (Singap.)}, 15(3):353--389, 2017.
	
	\bibitem{Wangbook2011}
	B.~Wang, Z.~Huo, C.~Hao, and Z.~Guo.
	\newblock {\em Harmonic analysis method for nonlinear evolution equations.
		{I}}.
	\newblock World Scientific Publishing Co. Pte. Ltd., Hackensack, NJ, 2011.
	
	\bibitem{WongLocalization}
	M.~W. Wong.
	\newblock {\em Wavelet transforms and localization operators}, volume 136 of
	{\em Operator Theory: Advances and Applications}.
	\newblock Birkh\"auser Verlag, Basel, 2002.
	
\end{thebibliography}
\end{document}